\newtheorem{thm}{Theorem}
\newtheorem{theorem}[thm]{Theorem}
\newtheorem{lemma}[thm]{Lemma}
\newtheorem{prop}[thm]{Proposition}
\newtheorem{corol}[thm]{Corollary}
\newtheorem{corollary}[thm]{Corollary}
\newtheorem{conj}[thm]{Conjecture}
\theoremstyle{remark}
\theoremstyle{definition}
\newtheorem{example}[thm]{Example}
\theoremstyle{remark}
\newcommand{\DD}{{\mathbb D}}
\newcommand{\OO}{{\mathcal O}}
\newcommand{\MM}{{\mathcal M}}
\newcommand{\LL}{{\mathcal L}}
\newcommand{\NN}{{\mathbb N}}
\newcommand{\cS}{{\mathcal S}}
\newcommand{\RR}{{\mathbb R}}
\newcommand{\CC}{{\mathbb C}}
\DeclareMathOperator{\sign}{sign} 
 \DeclareMathOperator{\dist}{dist}
\renewcommand{\phi}{\varphi}
\begin{document}


\title{On Carath\'eodory completeness in $\CC^n$}

\author{Armen Edigarian}

\begin{abstract}
We study $c$-completeness on domains in $\CC^n$. We reprove Sibony/Selby result on completeness on the complex plane. We also give
a characterization of $c$-completeness in Reinhardt domains.
\end{abstract}

\address{Faculty of Mathematics and Computer Science, Jagiellonian University,
\L ojasiewicza 6, 30-348 Krak\'ow, Poland}
\email{{Armen.Edigarian}@uj.edu.pl}
\thanks{Research partially supported by the National Science Center of Poland (NCN) grant
UMO-2011/03/B/ST1/04758.}

\keywords{Carath\'eodory distance, c-completeness, c-finitely compactness, Edwards' theorem, Jensen measure, Reinhardt domains.}

\maketitle

\section{Introduction}
Let $X$ be a complex manifold. Put
\begin{equation*}
c_X(z,w)=\sup\{\rho(f(z),f(w)): f\in\OO(X,\DD)\},
\end{equation*}
where $\rho$ denotes the hyperbolic (Poincar\'e) distance in the unit disc $\DD\subset\CC$ (see e.g. \cite{J-P},
Chapter I) and $\OO(X,\DD)$ denotes the set of all holomorphic mappings $X\to\DD$. We call $c_X$
the Carath\'eodory pseudodistance (see e.g. \cite{J-P}, Chapter II).

Note that, in general, $c_X$ is not a distance (e.g., $c_{\CC^n}\equiv0$). We say that a manifold $X$
is $c$-hyperbolic if $c_X$ is a distance, i.e., for any $z,w\in X$ there exists a bounded
holomorphic function $f$ on $X$ such that $f(z)\not=f(w)$. For $c$-hyperbolic manifold it is
natural to study a subclass of manifolds for which the metric space $(X,c_X)$ is complete. We call
this type of manifolds $c$-complete. Moreover, we say that a manifold $X$ is $c$-finitely compact if for any $x_0\in X$ and for any
$r>0$ we have $\{x\in X: c_X(x_0,x)<r\}\Subset X$. Note that any $c$-finitely compact domain is $c$-complete. Whether the inverse
implication holds for domains in $\CC^n$ is still an open question.

The following Conjecture is open for more then 30 years

\begin{conj}
Let $D\subset\CC^n$ be a bounded pseudoconvex domain with $C^\infty$ boundary. Then $D$ is $c$-complete.
\end{conj}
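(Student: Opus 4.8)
The plan is to establish the stronger property of $c$-finite compactness, which by the remark in the introduction already implies $c$-completeness (and is in fact how all known cases are proved). Fix a base point $z_0\in D$. Since $\{z\in D: c_D(z_0,z)<r\}\Subset D$ for every $r$ precisely when $c_D(z_0,z_j)\to\infty$ for every sequence $(z_j)\subset D$ converging to $\partial D$, the problem localizes at the boundary: using the compactness of $\partial D$ it suffices to produce, near each boundary point $p\in\partial D$, bounded holomorphic data forcing the Carath\'eodory distance from $z_0$ to blow up as $z\to p$. Because $\rho(f(z_0),f(z))\le c_D(z_0,z)$ for every $f\in\OO(D,\DD)$ and $\rho(a,\cdot)\to\infty$ at $\partial\DD$, it is enough to find, for each $p$, a function $f_p\in\OO(D,\DD)$ with $\limsup_{z\to p}\abs{f_p(z)}=1$, with enough uniformity to sweep out a boundary neighborhood. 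Equivalently, writing $\ell_D(z_0,z)=\sup\{\log\abs{f(z)}: f\in\OO(D,\DD),\ f(z_0)=0\}$, one wants $\ell_D(z_0,\cdot)\to0$ as $z\to\partial D$.

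At strongly pseudoconvex boundary points this is classical. The Levi polynomial gives a local holomorphic function peaking at $p$; after correcting it with a Hörmander $\bar\partial$-solution on the pseudoconvex domain $D$ and multiplying against a fixed global bounded function, one obtains an $f_p\in\OO(D,\DD)$ peaking at $p$, so $c_D(z_0,z)\to\infty$ locally there. The serious part of the argument is the weakly pseudoconvex portion of $\partial D$, and for this I would route everything through Jensen measures and Edwards' theorem. Diederich--Fornaess provides a bounded plurisubharmonic exhaustion of the form $-(-\delta_D)^\eta$ for small $\eta>0$, hence a plurisubharmonic barrier at every boundary point; by Edwards' duality the existence of such a barrier is equivalent to the absence of Jensen measures (for the cone of plurisubharmonic functions) whose barycenter remains interior while their mass escapes to $p$. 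The task then becomes to upgrade this plurisubharmonic barrier to a holomorphic one, i.e.\ to realize it as $\log\abs{f_p}$ with $f_p$ bounded holomorphic, since only holomorphic functions feed into $c_D$.

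This last upgrade is the heart of the matter and the step I expect to be the genuine obstacle. For boundary points of \emph{finite type} it can be carried out using the subelliptic $\bar\partial$-estimates and bumping constructions, so the conjecture is essentially settled there. The difficulty concentrates at boundary points of \emph{infinite type}, where analytic discs or infinitely flat directions in $\partial D$ can support Jensen measures that a plurisubharmonic barrier does not detect but that still obstruct the construction of separating bounded holomorphic functions. Controlling these measures uniformly near $p$ --- equivalently, manufacturing the holomorphic peak functions, or merely a sufficient supply of bounded holomorphic functions, out of the purely plurisubharmonic data furnished by Diederich--Fornaess --- is exactly the gap between the plurisubharmonic theory (which already yields completeness for the pluricomplex Green/Kobayashi side) and the holomorphic theory governing $c_D$. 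Closing that gap is what a complete proof must accomplish, and it is the reason the conjecture has resisted for over thirty years.
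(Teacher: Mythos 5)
The statement you are attacking is the Conjecture stated in the paper's introduction, which the author explicitly records as having been open for more than thirty years; the paper contains no proof of it, only partial results (the case $n=1$ and the case of pseudoconvex Reinhardt domains, obtained via the reformulation that every boundary point be a weak peak point). So there is no proof in the paper to compare against, and your text must stand on its own --- and as such it is a programme with an acknowledged hole rather than a proof. The decisive step, upgrading the Diederich--Fornaess plurisubharmonic barrier at a weakly pseudoconvex boundary point to a bounded holomorphic function whose modulus tends to $1$ there (equivalently, producing a weak peak function in the sense of Kosi\'nski--Zwonek at every $\zeta\in\partial D$), is precisely the open problem; you say yourself that ``closing that gap is what a complete proof must accomplish.'' The Edwards/Jensen-measure duality that the paper develops converts the existence of such a function into the nonexistence of nontrivial representing measures for $A(D\cup\{\zeta\})$ at $\zeta$, but it does not by itself rule those measures out for a general smooth pseudoconvex boundary; that is exactly where your argument, and everyone else's, stops.

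Two further points. First, the claim that the finite type case ``is essentially settled'' overstates the literature: holomorphic peak or weak peak functions at finite type boundary points are known in $\CC^2$ and at convex or h-extendible points, but not for arbitrary smooth finite type boundaries in $\CC^n$. Second, the opening reduction needs care: a single $f_p\in\OO(D,\DD)$ with $\limsup_{z\to p}\abs{f_p(z)}=1$ does not force $c_D(z_0,z)\to\infty$ as $z\to p$, since the limsup may be attained only along a thin sequence; what is actually needed is the weak peak property ($f_p\in\OO(D)\cap C(D\cup\{p\})$, $\abs{f_p}<1$ on $D$, $f_p(p)=1$), or else Pflug's criterion applied to every sequence accumulating at $p$. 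The example at the end of the paper (a Sibony-type Reinhardt corner $\abs{z}\cdot\abs{w}^{\alpha}<1$ with $\alpha$ irrational) illustrates how delicate the holomorphic side is: a peak function continuous on a full boundary neighbourhood can fail to exist even where a weak peak function does, so any construction must be genuinely pointwise at the boundary.
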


In our opinion, one of the main problem is lack of good conjectures, which "approximate" to the above one.
Our aim is to give such a conjecture and verify it for some class of domains.

First recall the following result, proved by P. Pflug
\begin{thm}\label{thm:1:pflug}
Let $D\subset\CC^n$ be a $c$-hyperbolic domain. Then the following conditions are equivalent:
\begin{enumerate}
\item $D$ is $c$-finitely compact;
\item for any sequence $\{z_n\}_{n=1}^\infty\subset D$ without accumulation points (w.r.t. usual topology in $D$)
there exists an $f\in\OO(D,\DD)$ with $\sup_{n}|f(z_n)|=1$.
\end{enumerate}
\end{thm}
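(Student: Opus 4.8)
The plan is to prove both implications using two elementary facts about the Poincar\'e distance: every $\phi\in\aut(\DD)$ is a $\rho$-isometry and $\aut(\DD)$ acts transitively on $\DD$, and $\rho(0,t)\to\infty$ as $|t|\to1^-$. For the direction (2)$\Rightarrow$(1) I would argue by contradiction. If $D$ is not $c$-finitely compact, there are $x_0\in D$, $r>0$ and a sequence $\{z_n\}\subset\{x:c_D(x_0,x)<r\}$ with no accumulation point in $D$. Apply (2) to obtain $f\in\OO(D,\DD)$ with $\sup_n|f(z_n)|=1$; post-composing with the automorphism of $\DD$ sending $f(x_0)$ to $0$ (which keeps $f$ in $\OO(D,\DD)$) we may assume $f(x_0)=0$. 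Then, along a subsequence, $|f(z_n)|\to1$, so that $c_D(x_0,z_n)\ge\rho(0,f(z_n))\to\infty$, contradicting $c_D(x_0,z_n)<r$.

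For (1)$\Rightarrow$(2) I would first extract the geometric content of $c$-finite compactness. Fix $x_0\in D$. Each closed sublevel set $\{x:c_D(x_0,x)\le r\}$ lies in the relatively compact set $\{x:c_D(x_0,x)<r+1\}\Subset D$, hence is relatively compact; a sequence without accumulation points in $D$ can therefore meet it only finitely often, which forces $c_D(x_0,z_n)\to\infty$. Passing to a subsequence is harmless (the supremum over a subsequence is dominated by the supremum over the whole sequence, and both are $\le1$), so I may assume $c_D(x_0,z_n)\ge n$. By the definition of $c_D$ choose $f_n\in\OO(D,\DD)$ with $\rho(f_n(x_0),f_n(z_n))\ge n-1$ and post-compose to normalize $f_n(x_0)=0$; writing $a_n:=|f_n(z_n)|$ we get $\rho(0,a_n)\ge n-1$, hence $a_n\to1$.

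The remaining and decisive task is to assemble the individual $f_n$, each of which has modulus near one only at its own point $z_n$, into a single $f\in\OO(D,\DD)$ with $\sup_n|f(z_n)|=1$. I would attempt this via an infinite product of Blaschke-type factors built from the $f_n$, arranging the subsequence $\{z_n\}$ to escape so rapidly and the numbers $a_n$ to be so close to $1$ that the product converges locally uniformly to a nonconstant $f\in\OO(D,\DD)$ while $|f(z_{n_k})|\to1$ along a further subsequence. An alternative route is a Baire-category argument in the Montel-compact set $\mathcal{F}=\{f\in\OO(D,\DD):f(x_0)=0\}$: one shows that for each $k$ the open set $\bigcup_n\{f\in\mathcal{F}:|f(z_n)|>1-1/k\}$ is dense, and then the generic $f$ in the intersection satisfies $\sup_n|f(z_n)|=1$.

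The hard part is exactly this assembly. Naive combinations fail for a structural reason: a convex combination such as $\tfrac{g+s f_n}{1+s}$ that stays close to a prescribed $g$ on compacta forces $s$ small, which then annihilates the gain at $z_n$, whereas a plain product $\prod_n f_n$ is destroyed by the common zeros introduced at $x_0$. The crux is to control the mutual interference of the factors (equivalently, to keep every other factor bounded away from $0$ at each $z_n$), or, in the Baire formulation, to establish the density of those open sets; both demand that the rapid-escape choice of the subsequence and the quantitative closeness $a_n\to1$ be exploited carefully. I expect this interference control, rather than either of the two limiting facts above, to be the genuine obstacle.
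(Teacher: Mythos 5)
Your direction (2)$\Rightarrow$(1) is correct and complete, and your reduction of (1)$\Rightarrow$(2) to the statement ``$c_D(x_0,z_n)\to\infty$, so there are $f_n\in\OO(D,\DD)$ with $f_n(x_0)=0$ and $|f_n(z_n)|\to1$'' is also correct. But the proof is not finished: you state yourself that the assembly of the $f_n$ into a single $f$ with $\sup_n|f(z_n)|=1$ is left open, and the two strategies you sketch (an infinite Blaschke-type product, or a Baire-category argument whose density step uses convex combinations) do run into exactly the interference problem you describe. That assembly step is the entire content of the hard implication, so as it stands there is a genuine gap. (For what it is worth, the paper itself gives no proof of this theorem --- it is quoted as Pflug's result --- so there is nothing on the paper's side to compare your argument with; but the gap is real independently of that.)

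The gap can be closed, and the device that closes it appears, for a different purpose, in Section 4 of this very paper: transfer everything to the right half-plane, where the contributions of the different $f_k$ can only reinforce one another. Concretely: pass to a subsequence so that one may choose $f_k\in\OO(D,\DD)$ with $f_k(x_0)=0$ and (after multiplying by a unimodular constant) $f_k(z_k)=\sigma_k\in(0,1)$ with $\frac{1+\sigma_k}{1-\sigma_k}\ge 4^k$. Since every $f_k$ vanishes at $x_0$, the Schwarz--Pick inequality gives $|f_k(z)|\le\tanh c_D(x_0,z)$ for all $k$, so on each compact $K\subset D$ one has $\sup_k\|f_k\|_K\le\rho_K<1$ and hence $\bigl|\frac{1+f_k}{1-f_k}\bigr|\le\frac{2}{1-\rho_K}$ uniformly in $k$. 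Therefore $h:=\sum_{k}2^{-k}\frac{1+f_k}{1-f_k}$ converges locally uniformly, $\Re h>0$, and $F:=\frac{h-1}{h+1}\in\OO(D,\DD)$. Since each summand has positive real part, $\Re h(z_k)\ge 2^{-k}\frac{1+\sigma_k}{1-\sigma_k}\ge 2^k$, whence $|F(z_k)-1|\le\frac{2}{1+\Re h(z_k)}\to0$ and $\sup_k|F(z_k)|=1$. This is precisely the summation that avoids the mutual cancellation blocking your product and convex-combination attempts; with it, your outline becomes a complete proof.
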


Following Kosi\'nski and Zwonek \cite{KZ}, for a domain $D\subset\CC^n$ we say that $\zeta\in\partial D$ is a weak peak point
if there exists $f\in\OO(D)\cap C(D\cup\{\zeta\})$ such that $|f|<1$ on $D$ and $f(\zeta)=1$.

The main result of the paper is to show
\begin{thm}\label{thm:1}
Let $D\subset\CC^n$ be a $c$-hyperbolic pseudoconvex domain. Assume that
\begin{itemize}
\item $n=1$, or
\item $D$ is a Reinhardt domain.
\end{itemize}
Then the following conditions are equivalent:
\begin{enumerate}
\item $D$ is $c$-complete.
\item $D$ is $c$-finitely compact.
\item any $\zeta\in\partial D$ is a weak peak point.
\end{enumerate}
\end{thm}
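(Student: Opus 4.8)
The plan is to establish the cycle of implications $(2)\Rightarrow(1)\Rightarrow(3)\Rightarrow(2)$, so that together they give the claimed equivalence. The implication $(2)\Rightarrow(1)$ is immediate and already noted in the introduction: every $c$-finitely compact domain is $c$-complete, since closed balls $\{x:c_D(x_0,x)\le r\}$ are compact and hence Cauchy sequences converge. So the real content lies in the remaining two arrows, and I would organize the argument so that pseudoconvexity, $c$-hyperbolicity, and the special structure in the two cases ($n=1$ or Reinhardt) are used only where genuinely needed.

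For $(1)\Rightarrow(3)$ I would argue by contraposition: assuming some boundary point $\zeta\in\partial D$ is \emph{not} a weak peak point, I would produce a $c$-Cauchy sequence converging (in the Euclidean sense) to $\zeta$, thereby violating $c$-completeness. The idea is that if no bounded holomorphic function peaks at $\zeta$, then in the $c_D$-metric the domain does not ``see'' $\zeta$ as being at infinite distance, so a sequence $z_j\to\zeta$ can be made $c$-Cauchy. Making this precise is where I expect to invoke Edwards' theorem and Jensen measures (advertised in the keywords): the failure of the weak peak property should translate, via a duality/minimax argument, into the existence of a nontrivial Jensen measure supported away from $\zeta$ relative to which the relevant plurisubharmonic envelope fails to separate $\zeta$, and this in turn controls $c_D(z_i,z_j)$ from above along the approaching sequence.

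The main obstacle, and the heart of the paper, should be $(3)\Rightarrow(2)$: assuming every boundary point is a weak peak point, I must verify Pflug's criterion, i.e.\ for every sequence $\{z_j\}$ without accumulation point in $D$ construct a single $f\in\OO(D,\DD)$ with $\sup_j|f(z_j)|=1$. The natural strategy is local-to-global: around each boundary point $\zeta$ the weak peak function $f_\zeta$ gives $|f_\zeta(z_j)|\to1$ for any subsequence clustering at $\zeta$; the task is to amalgamate these local peak functions into one bounded holomorphic function that works for the whole sequence. This is exactly the point where I expect the two structural hypotheses to enter decisively and separately. In the planar case $n=1$ I would use the potential-theoretic machinery available on $\CC$ (harmonic measure, the Poisson/Green kernel, and an Edwards-type envelope argument) to glue the local peaking data, recovering the Sibony/Selby theorem as promised in the abstract. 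In the Reinhardt case I would pass to logarithmic coordinates and exploit the monomial structure: bounded holomorphic functions decompose into Laurent monomials, so peaking can be analyzed on the convex logarithmic image, and the weak peak hypothesis at each boundary point should be shown equivalent to a convex-geometric separation condition that permits an explicit construction of the amalgamated peak function.

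The subtle step throughout will be keeping the constructed function \emph{bounded} by $1$ while forcing $|f(z_j)|\to1$ along the (possibly infinite and boundary-spreading) sequence; I anticipate handling this by a weighted infinite product or series of rescaled local peak functions, with convergence guaranteed by the pseudoconvexity of $D$ and, in the Reinhardt setting, by the convexity of its logarithmic image. Verifying that the limit is genuinely holomorphic and still satisfies $\sup_j|f(z_j)|=1$ is the routine-but-delicate bookkeeping I would defer to the detailed proof.
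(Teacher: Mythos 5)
Your cycle $(2)\Rightarrow(1)\Rightarrow(3)\Rightarrow(2)$ is a reasonable skeleton, but you have located the difficulty in the wrong place and, in the Reinhardt case, proposed an implication the paper does not know how to prove. First, $(3)\Rightarrow(2)$ is \emph{not} the heart of the matter and requires no amalgamation of local peak functions: Pflug's criterion (Theorem~\ref{thm:1:pflug}) only asks, for each sequence $\{z_j\}$ without accumulation points in $D$, for one $f\in\OO(D,\DD)$ with $\sup_j|f(z_j)|=1$ --- not $|f(z_j)|\to1$ along the whole sequence. Since the relevant domains are bounded, some subsequence converges to a boundary point $\zeta$, and the single weak peak function at that one $\zeta$ already achieves $\sup_j|f(z_j)|=1$. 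Your weighted infinite product of rescaled local peak functions is therefore unnecessary, and attempting it would be far harder than the problem demands.

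Second, your plan for $(1)\Rightarrow(3)$ (contraposition via Edwards' theorem and a nontrivial Jensen/representing measure) does match the paper's argument, but only for $n=1$. There, Corollary~\ref{cor:10} produces a representing measure $\mu\neq\delta_\zeta$, and the passage from $\mu$ to a non-convergent $c$-Cauchy sequence goes through the quotient $g(z)=\frac{f(z)-f(\eta)}{z-\eta}$ and the planar Newton potential estimate $|f(\eta_1)-f(\eta_2)|\le 34\|f\|_\infty\bigl(|\zeta-\eta_1|M(\eta_1)+|\zeta-\eta_2|M(\eta_2)\bigr)$ --- a strictly one-variable device with no analogue in $\CC^n$. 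For Reinhardt domains the paper does not run this argument at all: it invokes Zwonek's theorem to identify $c$-completeness and $c$-finite compactness with the geometric condition that $D$ is bounded and $\overline D\cap V_j\neq\varnothing$ forces $D\cap V_j\neq\varnothing$, and then the actual work is showing that this condition implies every boundary point is a weak peak point, by an explicit construction with monomials $z^\beta$ approximating a separating weight $|z_1|^{\xi_1}\cdots|z_n|^{\xi_n}$, glued via $F=\frac{h-1}{h+1}$ with $h=\sum_k 2^{-k}\frac{1+\tilde f_k}{1-\tilde f_k}$. Your proposal contains no substitute for the one-dimensional division trick and no route to the Reinhardt peak-function construction, so as written the Reinhardt half of the theorem is not proved.
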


Theorem~\ref{thm:1} in case $n=1$ is the result proved by N.~Sibony \cite{Sib} and M.A.~Selby \cite{Sel}.
However, their proof heavily depends on Melnikov's theorem (see e.g. \cite{GG}) which is used to show the existence of a specific measure and, in our opinion, are strictly one-dimensional. We show that the measure one can get from a generalization of Edwards' theorem.

Note that in Theorem~\ref{thm:1} the equivalence $(1)\Longleftrightarrow (2)$ is well-known. So, our result states that
these conditions are equivalent to condition $(3)$.

The above result suggests that c-finitely compactness is equivalent to the existence of weak peak functions.

\section{Non-compact version of Edwards' Theorem}
This part of the paper is motivated by \cite{GPP} and we use methods of that paper.

Let $X$ be a topological space and let $C(X)$ (resp. $C_{\CC}(X)$) be the set of all real-valued (resp. complex-valued) continuous
functions on $X$. We say that
$S\subset C(X)$ is a convex cone if $S$ contains constant functions and
$\alpha f+\beta g\in S$ for any $f,g\in S$ and any $\alpha,\beta\ge0$. Fix $x\in X$. For any $\phi\in C(X)$
we consider its envelope defined as
$$
\Phi_x(\phi)=\sup\{\psi(x): \psi\in S, \psi\le\phi\}.
$$
Note that $\Phi_x:C(X)\to[-\infty,+\infty)$ is a positive superlinear operator, i.e.,
\begin{enumerate}
\item $\Phi_x(c\phi)=c\Phi_x(\phi)$ for any $\phi\in C(X)$ and any $c\ge0$;
\item $\Phi_x(\phi_1+\phi_2)\ge \Phi_x(\phi_1)+\Phi_x(\phi_2)$ for any $\phi_1,\phi_2\in C(X)$;
\item $\Phi_x(\phi)\ge0$ for any $\phi\in C(X)$ such that $\phi\ge0$.
\end{enumerate}

We have the following result (which follows from a version of Hahn-Banach theorem, see e.g., \cite{GPP}).

\begin{theorem} Let $X$ be a topological space and let $x\in X$ be a fixed point. Assume that $S\subset C(X)$ is a convex cone.
Then
$$
\Phi_x(\phi)=\min\{L(\phi): L\text{ is a positive linear operator on }C(X), L\ge\Phi_x\}.
$$
\end{theorem}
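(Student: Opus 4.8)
The plan is to deduce the identity from the Hahn--Banach domination theorem, exploiting that $\Phi_x$ is superlinear and that the positivity of the dominating functional comes for free from property~(3). One direction is immediate: every admissible $L$ satisfies $L(\phi)\ge\Phi_x(\phi)$ by the constraint $L\ge\Phi_x$, so $\Phi_x(\phi)$ is a lower bound for the set on the right-hand side. Everything therefore reduces to exhibiting, for each fixed $\phi_0\in C(X)$, one positive linear operator $L$ with $L\ge\Phi_x$ and $L(\phi_0)=\Phi_x(\phi_0)$; this single $L$ simultaneously gives the equality and shows that the infimum is attained, i.e. is a minimum.

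First I would record the elementary consequences of superlinearity. Homogeneity (property~(1)) with $c=2$ forces $\Phi_x(0)=0$, and then superadditivity (property~(2)) gives $\Phi_x(\phi_0)+\Phi_x(-\phi_0)\le\Phi_x(0)=0$, that is, $\Phi_x(-\phi_0)\le-\Phi_x(\phi_0)$. Using this I define a linear functional $L_0$ on the line $\RR\phi_0$ by $L_0(t\phi_0)=t\,\Phi_x(\phi_0)$ and check the domination $L_0\ge\Phi_x$ there: for $t\ge0$ it is the equality $\Phi_x(t\phi_0)=t\,\Phi_x(\phi_0)$ from homogeneity, while for $t=-s$ with $s>0$ it reduces to $s\,\Phi_x(-\phi_0)\le-s\,\Phi_x(\phi_0)$, which is precisely the inequality just obtained.

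The main step is to extend $L_0$ from $\RR\phi_0$ to all of $C(X)$ while preserving the domination. Put $p:=-\Phi_x\colon C(X)\to(-\infty,+\infty]$; properties~(1) and (2) say exactly that $p$ is sublinear, and the inequality $L_0\ge\Phi_x$ on $\RR\phi_0$ reads $-L_0\le p$ there. The Hahn--Banach theorem, in the form that allows the gauge $p$ to take the value $+\infty$ (as in \cite{GPP}), then produces a linear functional $\ell\le p$ on $C(X)$ extending $-L_0$. Setting $L:=-\ell$ gives a linear operator with $L\ge\Phi_x$ and $L(\phi_0)=-\ell(\phi_0)=-L_0(\phi_0)=\Phi_x(\phi_0)$. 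Positivity is automatic: if $\phi\ge0$ then property~(3) yields $L(\phi)\ge\Phi_x(\phi)\ge0$, so $L$ is indeed a positive linear operator, completing the construction.

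I expect the genuine obstacle to be not the Hahn--Banach machinery but the bookkeeping forced by the value $-\infty$ that $\Phi_x$ may take when $\phi$ is unbounded below on a non-compact $X$. The anchoring of $L_0$ presupposes $\Phi_x(\phi_0)>-\infty$, and the one-dimensional Zorn-type extension step must be checked to never meet a contradictory constraint; this is guaranteed by $p>-\infty$ everywhere, equivalently $\Phi_x<+\infty$, which holds by hypothesis. Once $\Phi_x(\phi_0)$ is finite the argument above is entirely standard. The remaining case $\Phi_x(\phi_0)=-\infty$ has to be accommodated by the conventions on the admissible operators $L$ (allowing, or approximating by, functionals with $L(\phi_0)$ unbounded below), so that the minimum on the right-hand side is still realised as $-\infty$; this extended-real-value analysis is where I would concentrate the care.
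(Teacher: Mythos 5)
Your overall strategy --- the trivial inequality in one direction, then a Hahn--Banach extension of the functional $t\phi_0\mapsto t\,\Phi_x(\phi_0)$ dominated by the sublinear gauge $p=-\Phi_x$, with positivity of the resulting $L$ coming for free from property (3) --- is exactly what the paper intends: the paper gives no proof of this theorem at all, only the remark that it follows from a version of Hahn--Banach together with the citation of \cite{GPP}. Your reduction to a single touching functional, the verification that $\Phi_x(0)=0$ and $\Phi_x(-\phi_0)\le-\Phi_x(\phi_0)$, the domination check on the line $\RR\phi_0$, and the positivity argument are all correct.

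The gap is the assertion that the one-step Zorn extension ``never meets a contradictory constraint'' because $p>-\infty$ everywhere. That is false for a general sublinear $p\colon C(X)\to(-\infty,+\infty]$. In the one-step extension from $(M,\ell)$ to $M+\RR v$ one must find a \emph{real} $\alpha$ with $\sup_{m\in M}\bigl(\ell(m)-p(m-v)\bigr)\le\alpha\le\inf_{m\in M}\bigl(p(m+v)-\ell(m)\bigr)$; the usual subadditivity argument shows the sup does not exceed the inf, but if $p\equiv+\infty$ on the affine set $M+v$ the right-hand side is $+\infty$ and gives no control, while the left-hand side can also be $+\infty$, in which case no $\alpha$ exists. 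A concrete two-dimensional counterexample: take $p=0$ on the open lower half-plane and at the origin, $p=+\infty$ elsewhere (this is sublinear and nowhere $-\infty$), $M$ the horizontal axis, $\ell(x,0)=x$, $v=(0,1)$; then $\ell\le p$ on $M$ but $\ell$ admits no dominated extension. So finiteness of $p$ is not the point. The repair uses the specific structure of $\Phi_x$: the set $G=\{\phi:\Phi_x(\phi)>-\infty\}$ is a convex cone containing every function bounded below (because $S$ contains the constants), the constraint $\ell\le p$ is vacuous off $G$, and for $w\notin G-G$ the set $\bigl((G-G)+\RR w\bigr)\setminus(G-G)$ is disjoint from $G$. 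Hence one first extends within the subspace $G-G$ by adjoining elements $v\in G$ one at a time; there superadditivity in the form $\Phi_x(m-v)\le\Phi_x(m)-\Phi_x(v)$ together with $\ell(m)\le-\Phi_x(m)$ bounds the left-hand supremum by $-\Phi_x(v)<+\infty$, and the subsequent extension beyond $G-G$ is unconstrained. Finally, note that when $\Phi_x(\phi)=-\infty$ the identity as stated cannot hold for real-valued $L$ (the paper's later theorems carry the explicit disjunction ``$\Phi_x(\phi)=-\infty$ or \dots''); you correctly flag this, but it should be recorded as a hypothesis rather than deferred to ``conventions'' on the admissible operators.
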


There is an extensive literature on the study of positive linear functionals on $C(X)$ (see e.g., \cite{Hewitt}, \cite{BP}, and \cite{Res})\footnote{The author is thankful
to Piotr Niemiec and Jan Stochel for the references on linear functionals.}. However, we are interested on very special
ones, related to compact spaces. Let $X$ be a normal topological space.
We say that $X$ is of $GPP$-type if for any positive linear functional $\Lambda:C(X)\to\RR$ there exists
a compact set $K\subset X$ such that $\Lambda(\phi)=0$ for any $\phi\in C(X)$ with $\phi=0$ on $K$.
According to \cite{GPP} any locally compact $\sigma$-compact Hausdorff space is of $GPP$-type. We have
another example.

\begin{theorem} Let $D\subset\CC^n$ be a domain and let $K\subset\partial D$ be a compact set. Then $X=D\cup K$ is
of $GPP$-type.
\end{theorem}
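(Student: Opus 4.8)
The plan is to first dispatch normality and then prove the support statement by contradiction, building a single nonnegative continuous function on which $\Lambda$ is infinite. Since $X=D\cup K$ is a subspace of the metric space $\CC^n$ it is metrizable, hence normal, so only the second requirement in the definition of $GPP$-type needs work. Fix a positive linear functional $\Lambda$ on $C(X)$ and set $M=\Lambda(1)<\infty$; positivity gives $|\Lambda(\phi)|\le M\,\|\phi\|_\infty$ for bounded $\phi$ and $|\Lambda(\phi)|\le\Lambda(|\phi|)$ in general. Suppose, for contradiction, that no compact $K'\subset X$ captures $\Lambda$, i.e.\ for every compact $K'$ there is $\phi\in C(X)$ vanishing on $K'$ with $\Lambda(\phi)\ne 0$; replacing $\phi$ by $|\phi|$ we may take $\phi\ge 0$ and $\Lambda(\phi)>0$. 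Two auxiliary functions organise the geometry: $u(z)=\dist(z,\partial D)$ and $v(z)=\dist(z,K)$, both continuous on $X$ with $u\le v$ and $u^{-1}(0)=v^{-1}(0)=X\cap\partial D=K$.

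The heart of the matter is a clean separation of $X$ into a part near $K$ and a part away from $K$. For $\delta>0$ put $A_\delta=\{z\in X:v(z)\ge\delta\}$. Each $A_\delta$ is closed in $X$ and contained in the open set $D$, hence is a closed subspace of the locally compact, $\sigma$-compact space $D$; therefore $A_\delta$ is itself locally compact and $\sigma$-compact, and by the result of \cite{GPP} it is of $GPP$-type. The point is that the only way $\Lambda$ can fail to be captured by a compact set is by ``leaking'' along a sequence $\{z_j\}\subset D$ with no accumulation point in $X$; such a sequence converges in $\CC^n\cup\{\infty\}$ to a point of $(\partial D\setminus K)\cup\{\infty\}$, and every such point has positive distance to the compact set $K$. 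Consequently any escaping sequence eventually lies in some $A_{\delta}$, i.e.\ all the non-compact ends of $X$ sit inside the $GPP$-type pieces $A_\delta$, safely away from $K$, while the behaviour near $K$ is governed by the compactness of $K$.

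Carrying out the contradiction, I would take a compact exhaustion $F_1\subset F_2\subset\cdots$ of $D$ with $\bigcup_j F_j=D$, use the assumption to pick $\phi_j\ge 0$ vanishing on $F_j$ with $\Lambda(\phi_j)>0$, and then replace $\phi_j$ by $(\phi_j-\eps_j)^+$ for a small $\eps_j>0$. Since $\phi_j$ is continuous and vanishes on the compact set $K$, the open set $\{\phi_j<\eps_j\}$ contains a neighbourhood of $K$, so the truncated function vanishes there; moreover $\Lambda((\phi_j-\eps_j)^+)\ge\Lambda(\phi_j)-\eps_j M$, which stays positive for small $\eps_j$, and after rescaling we may assume $\Lambda(\phi_j)\ge 1$. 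Each new $\phi_j$ now vanishes on $F_j$ and on a neighbourhood of $K$, so its support is a closed subset of $D$ bounded away from $K$, lying in some $A_{\delta_j}$. Using the locally finite exhaustion furnished by the $GPP$-type structure of a fixed $A_{\delta_0}$, into which the compactness of $K$ allows all the supports to be pushed, the sum $\Phi=\sum_j\phi_j$ is locally finite on $D$ and vanishes near $K$, hence defines a function in $C(X)$; but then $\Lambda(\Phi)\ge\sum_{j\le N}\Lambda(\phi_j)\ge N$ for all $N$, contradicting $M<\infty$.

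The main obstacle is exactly the continuity of $\Phi$ at the points of $K\cap\overline{\partial D\setminus K}$: there, $D$-points close to $K$ may simultaneously be close to $\partial D\setminus K$, so a priori the tail $\sum_{j>N}\phi_j$ need not tend to $0$ as one approaches such a boundary point, and no continuous function on $X$ can blow up uniformly along $\partial D\setminus K$ while staying finite on $K$. Overcoming this is where the two ingredients must be combined with care: the truncation $(\phi_j-\eps_j)^+$ forces each $\phi_j$ to vanish on an honest neighbourhood of $K$, while the escaping-sequence dichotomy together with the compactness of $K$ must be used to confine all the supports into one \emph{fixed} $A_{\delta_0}$, so that a single neighbourhood of $K$ is avoided by every $\phi_j$ and the continuity of $\Phi$ on $K$ becomes automatic. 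Verifying that one $\delta_0$ suffices --- rather than $\delta_j\to 0$ --- is the delicate step, and it is precisely here that the compactness of $K$, as opposed to mere $\sigma$-compactness of $D$, is indispensable.
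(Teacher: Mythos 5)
There is a genuine gap, and it sits exactly where you place your own warning flag: the confinement of all the witnesses $\phi_j$ to a single set $A_{\delta_0}=\{v\ge\delta_0\}$. The contradiction hypothesis only gives you, for each compact $F_j\cup K$, some $\phi_j$ vanishing there with $\Lambda(\phi_j)>0$, and your truncation $(\phi_j-\eps_j)^+$ only forces $\supp\phi_j\subset A_{\delta_j}$ for some $\delta_j>0$ depending on $j$; nothing prevents $\delta_j\to0$. Your escaping-sequence dichotomy is true but irrelevant here: the supports of the $\phi_j$ are not escaping sequences, and they are perfectly free to occupy the region $\{0<v<\delta_0\}\setminus F_j$ for every $j$. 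If $\delta_j\to0$, the sum $\Phi=\sum_j\phi_j$ is still locally finite on $D$ (each $z\in D$ lies in $\interior F_j$ eventually), but at a point $\zeta\in K$ the tail need not tend to $0$, so $\Phi$ need not be continuous (or even finite-valued) on $X$, and the contradiction collapses. Nor does the other horn of your dichotomy close the argument: if for every $\delta$ the functions supported in $A_{\delta}$ \emph{are} captured by some compact $C_\delta\subset A_\delta$, the union $\bigcup_m C_{1/m}\cup K$ need not be compact, because the sets $A_{1/m}$ are only bounded from below in the variable $v=\dist(\cdot,K)$ and the compacts $C_{1/m}$ may drift off to $\partial D\setminus K$ or to infinity as $m\to\infty$.

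The paper's proof repairs precisely this defect by replacing your nested sets $A_\delta$ with two-sided shells $A_m=\{r_m\le\dist(\cdot,K)\le R_{m-1}\}$, $R_m\to0$, together with a partition of unity $\chi_m(\dist(\cdot,K))$ subordinate to them. Your compact-exhaustion-plus-divergent-sum argument is then run \emph{inside each shell separately} (this is where your idea is correct and is exactly what the paper does with the exhaustion $K_{mj}$ of $A_m$), producing one compact $K_{m j(m)}\subset A_m$ per shell; because the $m$-th shell is squeezed between the spheres of radius $r_m$ and $R_{m-1}$ about $K$, the union $L=\bigcup_m K_{mj(m)}\cup K$ accumulates only on $K$ and hence \emph{is} compact. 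Finally --- and this step has no counterpart in your plan --- a function $\phi\ge0$ vanishing on $L$ is not handled by a support argument at all: one writes $\phi=\sum_{m\le m_0}\phi\chi_m+\tilde\phi$, kills the finitely many leading terms using the shell compacts, and bounds $\Lambda(\tilde\phi)\le\eps\Lambda(1)$ using only that $\phi=0$ on $K$ and is continuous, so that $\phi<\eps$ on a neighbourhood of $K$ containing $\supp\tilde\phi$. You would need to add both the shell decomposition and this final $\eps$-estimate to make your argument work; as written, the proposal does not prove the theorem.
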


\begin{proof}
Take sequences $R_m,r_m$ such that $R_m>r_m>R_{m+1}$ and $R_m\to 0$ (e.g.,
$R_m=\frac{1}{3^m}$ and $r_m=\frac{2}{3^{m+1}}$). Consider functions $\chi_m\in C^\infty(\RR)$ such that
$0\le \chi_m\le 1$ having the following properties:
$$
\chi_1(t)=
\begin{cases}
1& t\ge R_1\\
0& t\le r_1
\end{cases}
$$
and for any $m\ge2$
$$
\chi_m(t)=
\begin{cases}
1-\sum_{j=1}^{m-1}\chi_j(t) & t\ge R_m\\
0& t\le r_m
\end{cases}.
$$
Note that $\sum_{m=1}^\infty \chi_m(t)=1$ for $t>0$. Moreover, $\chi_m(t)=0$ for $t\ge R_{m-1}$ and $t\le r_{m}$.

Put $A_1=\{x\in X: \dist(x,K)\ge r_1\}$ and
$A_m=\{x\in X: r_m\le \dist(x,K)\le R_{m-1}\}$, $m\ge2$. Note that $A_m$, $m\ge2$, are (relatively) closed sets in $D$ and
that $\chi_m(\dist(x,K))=0$ for $x\in\CC^n\setminus A_m$. Moreover, $\cup_{m=1}^\infty A_m=D$.
We want to show that for any $m\ge1$ there exists a compact set $K_m\subset A_m$ with the following property: $\Lambda(\phi)=0$
whenever $\phi\in C(X)$, $\phi\ge0$, and $\phi=0$ on $(X\setminus A_m)\cup K_m$.

Fix $m\ge1$. For any integer $j\ge1$ we put
$$
K_{mj}=\{x\in A_m: \dist(x,\partial D)\ge\frac{1}{j}, \|x\|\le j\}.
$$
Note that $K_{mj}$ are compact sets. We also have $\cup_{j=1}^\infty K_{mj}=A_m$ for any $m\ge1$.
Let us show that there exists a $j=j(m)$ such that $\Lambda(\phi)=0$ whenever $\phi\in C(X)$, $\phi\ge0$,
$\phi=0$ on $(D\setminus A_{m})\cup K_{mj}$. Indeed, assume that for any
$j\ge1$ there exists $\phi_{j}\in C(X)$ such that $\phi_j\ge0$ on $X$, $\phi_j=0$ on
$(X\setminus A_{m})\cup K_{mj}$, and $\Lambda(\phi_j)\not=0$.
Without loss of generality we may assume that $\Lambda(\phi_j)=1$.
Consider $\phi=\sum_{j=1}^\infty \phi_j$. Note that $\phi\in C(X)$. Moreover,
$\Lambda(\phi)\ge \sum_{j=1}^N \Lambda(\phi_j)=N$ for any $N\in\NN$. Hence, $\Lambda(\phi)=+\infty$. A contradiction.

Consider a set
$$
L=\cup_{j=1}^\infty K_{mj(m)}\cup K.
$$
Note that $L$ is a compact set. It suffices to show that $\Lambda(\phi)=0$ for any $\phi\in C(X)$ such that
$\phi=0$ on $L$. First let us shot the mentioned property for $\phi\ge0$.
So, fix a function $\phi\in C(X)$ such that $\phi\ge0$ and $\phi=0$ on $L$. Fix $\epsilon>0$. Since $\phi=0$ on $K$, there exists
a $\delta>0$ such that $\phi<\epsilon$ on $\{z\in D: \dist(z,K)\le\delta\}$. We have
$$
\phi=\sum_{m=0}^\infty \phi\psi_m=\sum_{m=0}^{m_0}\phi\psi_m+\sum_{m=m_0+1}^\infty\phi\psi_m.
$$
Put $\tilde\phi=\sum_{m=m_0+1}^\infty\phi\psi_m$. For sufficiently big $m_0$ we have $\tilde\phi=0$ on $\{z\in D: \dist(z,K)>\delta\}$.
And, therefore, $0\le\tilde\phi\le\epsilon$ on $D$. So,
$$
\Lambda(\phi)=\sum_{m=0}^{m_0}\Lambda(\phi\psi_m)+\Lambda(\tilde\phi)=\Lambda(\tilde\phi)\le\epsilon\Lambda(1).
$$
Since $\epsilon>0$ was arbitrary, we get $\Lambda(\phi)=0$.

If $\phi$ is not necessary positive, then take $\phi=\phi_+-\phi_-$, where $\phi_+=\max\{\phi,0\}$ and $\phi_-=\phi_+-\phi$.
Then $L(\phi)=L(\phi_+)-L(\phi_-)=0$.
\end{proof}

We need also the following version of the Riesz representation theorem.
\begin{prop} Let $X$ be a normal topological space and let $L:C(X)\to\RR$ be a positive linear functional. Assume that
there exists a compact set $K\subset X$ such that $L(\phi)=0$ whenever $\phi\in C(X)$, $\phi=0$ on $K$. Then there exists a Borel
finite measure $\mu$ with support in $K$ such that
$$
L(\phi)=\int\phi (x)d\mu(x)\quad\text{ for any }\phi\in C(X).
$$
\end{prop}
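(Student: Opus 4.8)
The strategy is to reduce the statement to the classical Riesz representation theorem on the compact space $K$, exploiting that the hypothesis forces $L$ to depend only on the restriction of a function to $K$. Concretely, I would work with the restriction homomorphism $r\colon C(X)\to C(K)$, $r(\phi)=\phi|_K$, and show that $L$ factors through $r$.

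First, being compact and Hausdorff (as a subspace of the Hausdorff normal space $X$), the set $K$ is itself normal, so the classical Riesz theorem is available on it. Since $X$ is normal and $K$ is closed, the Tietze extension theorem shows that $r$ is surjective: every $g\in C(K)$ admits an extension $\tilde g\in C(X)$ with $\tilde g|_K=g$. I would then define $\tilde L\colon C(K)\to\RR$ by $\tilde L(g)=L(\tilde g)$ for any such extension. This is well defined precisely because of the hypothesis on $L$: if $\tilde g_1,\tilde g_2$ both restrict to $g$ on $K$, then $\tilde g_1-\tilde g_2$ vanishes on $K$, whence $L(\tilde g_1-\tilde g_2)=0$. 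Linearity of $\tilde L$ is immediate, and positivity follows by choosing, for $g\ge 0$ on $K$, the nonnegative extension $\max\{\tilde g,0\}$, whose $L$-value is nonnegative and still equals $\tilde L(g)$ by well-definedness.

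Now $\tilde L$ is a positive linear functional on $C(K)$ with $K$ compact Hausdorff, so the classical Riesz representation theorem yields a finite regular Borel measure $\mu$ on $K$ with $\tilde L(g)=\int_K g\,d\mu$ for all $g\in C(K)$. Regarding $\mu$ as a Borel measure on $X$ with $\mu(X\setminus K)=0$, its support lies in $K$, and $\mu(K)=\tilde L(1)=L(1)<+\infty$, so $\mu$ is finite. Finally, for any $\phi\in C(X)$ we obtain $L(\phi)=\tilde L(\phi|_K)=\int_K\phi\,d\mu=\int_X\phi\,d\mu$, which is the desired representation. The only delicate points are the well-definedness and positivity of $\tilde L$, which hinge entirely on the vanishing hypothesis, together with checking that the topological assumptions make Tietze and Riesz applicable (normality and closedness of $K$ for the former, compact Hausdorffness of $K$ for the latter); once these are in place the statement is essentially a reformulation of the compact case.
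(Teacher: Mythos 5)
Your proposal is correct and follows essentially the same route as the paper: extend a function on $K$ to $X$ (Tietze, using normality and the closedness of the compact set $K$), define $\widetilde L(g)=L(\tilde g)$, and apply the classical Riesz representation theorem on $K$. You supply the well-definedness and positivity checks that the paper leaves implicit, but the argument is the same.
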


\begin{proof} Let us define a positive linear operator $\widetilde L:C(K)\to\RR$. Fix $\phi\in C(K)$. From the normality
of $X$ there exists $\widetilde\phi\in C(X)$ such that $\widetilde\phi=\phi$ on $K$. We put $\widetilde L(\phi)=L(\widetilde\phi)$.
Now we use the classical Riesz representation theorem for the operator $\widetilde L$ and get $\mu$.
\end{proof}

For each convex cone $S$ and a point $x\in X$ we associate two sets:
\begin{enumerate}
\item $J_x^{S}(X)$ - the set of all \emph{Jensen measures} with barycenter at $x$ which consists of all Borel probability measures $\mu$ with compact support such that $\psi(x)\le\int \psi d\mu$ for any $\psi\in X$;
\item $R_x^{S}(X)$ - the set of all \emph{representing measures} with barycenter at $x$ which consists of all Borel probability measures $\mu$ with compact support such that $\psi(x)=\int \psi d\mu$ for any $\psi\in X$;
\end{enumerate}

Note that $R_x^{S}(X)\subset J_x^{S}(X)$.

In 1965 Edwards \cite{E} proved the following result:
\begin{theorem}
Let $X$ be a compact topological space and let $S\subset C(X)$ be a convex cone. Assume that $\phi$ is a lower semicontinuous function on $X$. Then
$$
\Phi_x(\phi)=\min\Big\{\int \phi d\mu: \mu\in J_{x}^S(X)\Big\}.
$$
\end{theorem}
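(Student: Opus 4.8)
The plan is to establish the two inequalities separately, the substantive content being the construction of a Jensen measure that attains the value $\Phi_x(\phi)$. The inequality $\Phi_x(\phi)\le\int\phi\,d\mu$ for every $\mu\in J_x^S(X)$ is immediate: if $\psi\in S$ satisfies $\psi\le\phi$, then because $\mu$ is a Jensen measure and $\mu\ge0$ we have $\psi(x)\le\int\psi\,d\mu\le\int\phi\,d\mu$, and taking the supremum over all such $\psi$ gives $\Phi_x(\phi)\le\int\phi\,d\mu$. Thus $\Phi_x(\phi)$ is a lower bound for the right-hand side, and the whole problem reduces to producing a single $\mu\in J_x^S(X)$ with $\int\phi\,d\mu\le\Phi_x(\phi)$.

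I would first settle the case $\phi\in C(X)$. Here $\Phi_x$ is a positive superlinear operator on $C(X)$, so by the Hahn--Banach theorem quoted above there is a positive linear functional $L$ on $C(X)$ with $L\ge\Phi_x$ and $L(\phi)=\Phi_x(\phi)$, and since $X$ is compact the classical Riesz representation theorem gives a finite positive Borel measure $\mu$ with $L(\cdot)=\int(\cdot)\,d\mu$. To see $\mu\in J_x^S(X)$ I would verify two points. First, $\mu$ is a probability measure: since the constants $\pm1$ belong to $S$ one computes $\Phi_x(1)=1$ and $\Phi_x(-1)=-1$, and then $L\ge\Phi_x$ forces $\mu(X)=L(1)=1$. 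Second, $\mu$ is Jensen: for $\psi\in S$ we have $\psi\le\psi$, hence $\Phi_x(\psi)\ge\psi(x)$, so $\int\psi\,d\mu=L(\psi)\ge\Phi_x(\psi)\ge\psi(x)$. Then $\int\phi\,d\mu=\Phi_x(\phi)$, which settles the continuous case.

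Finally I would pass from continuous to lower semicontinuous $\phi$ by a weak-$\ast$ compactness argument. Because $X$ is compact, $J_x^S(X)$ is a weak-$\ast$ compact convex set of measures: it sits inside the weak-$\ast$ compact set of probability measures, it is cut out by the weak-$\ast$ closed conditions $\int\psi\,d\mu\ge\psi(x)$ for $\psi\in S$, and it is nonempty since $\delta_x\in J_x^S(X)$. Writing $\phi$ as the upward-directed supremum of its continuous minorants $\{\psi_\gamma\}\subset C(X)$, the functional $\mu\mapsto\int\phi\,d\mu=\sup_\gamma\int\psi_\gamma\,d\mu$ is a supremum of weak-$\ast$ continuous functionals, hence weak-$\ast$ lower semicontinuous, and therefore attains its minimum on $J_x^S(X)$; this already accounts for the ``$\min$''. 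To identify this minimum with $\Phi_x(\phi)$, I would apply the continuous case to each $\psi_\gamma$, obtaining $\mu_\gamma\in J_x^S(X)$ with $\int\psi_\gamma\,d\mu_\gamma=\Phi_x(\psi_\gamma)\le\Phi_x(\phi)$, the last step by monotonicity of $\Phi_x$. Passing to a weak-$\ast$ cluster point $\mu_*\in J_x^S(X)$ of the net $\{\mu_\gamma\}$ and using that for each fixed index $\delta$ one has $\int\psi_\delta\,d\mu_\gamma\le\int\psi_\gamma\,d\mu_\gamma\le\Phi_x(\phi)$ for all $\gamma\ge\delta$, I obtain $\int\psi_\delta\,d\mu_*\le\Phi_x(\phi)$ for every $\delta$; taking the supremum over $\delta$ and using that the integral of a lower semicontinuous function against a Radon measure is the supremum of the integrals of its continuous minorants yields $\int\phi\,d\mu_*\le\Phi_x(\phi)$, as required.

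The step I expect to be the main obstacle is this last one: making rigorous the interchange of the weak-$\ast$ net limit with the directed passage to $\phi$, in particular the identity $\int\phi\,d\mu_*=\sup_\gamma\int\psi_\gamma\,d\mu_*$ and the subnet/tail bookkeeping used to transfer the bound $\Phi_x(\phi)$ from the $\mu_\gamma$ to $\mu_*$. The continuous case, by contrast, is essentially a verification built directly on the quoted Hahn--Banach and Riesz theorems.
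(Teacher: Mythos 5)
The paper does not prove this statement: it is quoted verbatim as Edwards' 1965 theorem with a citation to \cite{E}, so there is no internal proof to compare against. Your argument is a correct, self-contained proof along the standard lines, and it meshes well with the Hahn--Banach-type theorem the paper does state (the identity $\Phi_x(\phi)=\min\{L(\phi)\}$ over positive linear $L\ge\Phi_x$): the easy inequality, the continuous case via Hahn--Banach plus Riesz (with the correct verification that $L(1)=1$ using $\Phi_x(\pm1)=\pm1$, which needs the paper's convention that $S$ contains \emph{all} constants, not just nonnegative ones), and the lower semicontinuous case via weak-$\ast$ compactness of $J_x^S(X)$ and a cluster-point/tail argument are all sound. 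Two small caveats you should make explicit: (i) the space must be compact \emph{Hausdorff} for the Riesz representation theorem and for the fact that a lower semicontinuous function bounded below is the upward-directed supremum of its continuous minorants (via Urysohn); the theorem as printed says only ``compact topological space,'' which is the usual abuse; (ii) lower semicontinuity on a compact space guarantees $\phi$ is bounded below, which is what makes the family of continuous minorants nonempty and the approximation $\int\phi\,d\mu=\sup_\gamma\int\psi_\gamma\,d\mu$ valid for the (Radon) measures produced by Riesz. With those points noted, the ``main obstacle'' you flag at the end is handled correctly by your observation that the tail $\{\mu_\gamma:\gamma\ge\delta\}$ still clusters at $\mu_*$ and that $\{\mu:\int\psi_\delta\,d\mu\le\Phi_x(\phi)\}$ is weak-$\ast$ closed.
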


In 2013 Gogus, Perkins, and Poletsky \cite{GPP} proved the following non-compact version of Edwards' theorem
\begin{theorem}\label{thm:GPP}
Let $X$ be a locally compact $\sigma$-compact Hausdorff space and let  $S\subset C(X)$ be a convex cone.
Assume $\phi$ be a continuous function on $X$. Then
$\Phi_x(\phi)\equiv-\infty$ or
$$
\Phi_x(\phi)=\min\Big\{\int \phi d\mu: \mu\in J_{x}^S(X)\Big\}.
$$
\end{theorem}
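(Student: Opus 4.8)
The plan is to treat the $-\infty$ case as a trivial alternative and to reduce everything else to the localization machinery developed above; so assume throughout that $\Phi_x(\phi)>-\infty$, since otherwise the first alternative of the statement holds and there is nothing to prove. I would first dispose of the easy inequality $\Phi_x(\phi)\le\int\phi\,d\nu$ for every $\nu\in J_x^S(X)$: if $\psi\in S$ satisfies $\psi\le\phi$, then the Jensen property and monotonicity of the integral give $\psi(x)\le\int\psi\,d\nu\le\int\phi\,d\nu$, the last integral being finite since $\nu$ is compactly supported and $\phi$ continuous; taking the supremum over all such $\psi$ proves the inequality. Hence it suffices to exhibit a single $\mu\in J_x^S(X)$ with $\int\phi\,d\mu=\Phi_x(\phi)$, for then $\mu$ attains the minimum.

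To build $\mu$, I would feed $\phi$ into the Hahn--Banach representation stated above. Since $\Phi_x$ is a positive superlinear operator and $\Phi_x(\phi)$ is finite, that result furnishes a positive linear functional $L$ on $C(X)$ with $L\ge\Phi_x$ and $L(\phi)=\Phi_x(\phi)$. The non-compactness of $X$ is absorbed at exactly this point: because $X$ is locally compact, $\sigma$-compact and Hausdorff, it is of GPP-type, so there is a compact set $K\subset X$ with $L(\psi)=0$ whenever $\psi$ vanishes on $K$. The Riesz-type Proposition above then represents $L$ by a finite positive Borel measure $\mu$ supported in the compact set $K$, i.e.\ $L(\psi)=\int\psi\,d\mu$ for every $\psi\in C(X)$.

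It remains to verify the three defining properties of a Jensen measure. Compact support is built into the construction. For the Jensen inequality, if $\psi\in S$ then using $\psi$ itself as a test function gives $\Phi_x(\psi)\ge\psi(x)$, whence $\int\psi\,d\mu=L(\psi)\ge\Phi_x(\psi)\ge\psi(x)$. For the normalization I would exploit that both constants $1$ and $-1$ lie in $S$: comparing test functions yields $\Phi_x(1)=1$ and $\Phi_x(-1)=-1$, so $L\ge\Phi_x$ forces $L(1)\ge1$ and, via $L(1)=-L(-1)\le1$, also $L(1)\le1$; thus $\mu(X)=L(1)=1$ and $\mu$ is a probability measure. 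Consequently $\mu\in J_x^S(X)$ and $\int\phi\,d\mu=L(\phi)=\Phi_x(\phi)$, which together with the easy inequality shows the minimum is attained and equals $\Phi_x(\phi)$.

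The genuinely new ingredient, and the step I expect to be the main obstacle, is the passage from the abstract functional $L$ to a \emph{compactly supported} probability measure: in the compact setting of Edwards' theorem this is automatic, whereas here it rests entirely on the GPP-type property of $X$ and the localizing Riesz step, and one must check that the localization does not spoil the normalization $\mu(X)=1$ (rather than leaving a subprobability measure). The $-\infty$ alternative, by contrast, requires no separate argument beyond recording that the Hahn--Banach step is available precisely when $\Phi_x(\phi)$ is finite.
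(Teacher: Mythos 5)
Your argument is correct. It is worth pointing out that the paper itself offers no proof of this statement: it is quoted from Gogus--Perkins--Poletsky \cite{GPP} as a known result, and the machinery of Section~2 (the Hahn--Banach representation of $\Phi_x$ as a minimum over positive linear functionals $L\ge\Phi_x$, the notion of a space of $GPP$-type, and the localized Riesz representation) is assembled precisely so that the subsequent generalization to normal spaces of $GPP$-type can be asserted ``from the above results''. Your proof is exactly that intended derivation, specialized to the locally compact $\sigma$-compact Hausdorff case (such a space is paracompact, hence normal, and of $GPP$-type by \cite{GPP}): the easy inequality via the Jensen property, the choice of a minimizing $L$ with $L(\phi)=\Phi_x(\phi)$, its localization to a compact set, the Riesz step, and the use of the constants $\pm1\in S$ to force $L(1)=1$ are all the right moves, and you correctly identify the survival of the normalization under localization as the point that needs checking. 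In fact your argument gives slightly more than the paper's displayed generalization, since you show the infimum is attained (a genuine ``min''), whereas the paper's statement for general $GPP$-type spaces retreats to an ``inf''.
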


From the above results we get
\begin{theorem} Let $X$ be a normal topological space of $GPP$-type and let $S\subset C(X)$ be a convex cone.
If $\phi\in C(X)$  then for any $x\in X$ we have $\Phi_x(\phi)=-\infty$ or
$$
\Phi_x(\phi)=\inf\Big\{\int\phi d\mu: \mu\in J_{x}^{\cS}\Big\}\quad\text{ for any }x\in X.
$$
\end{theorem}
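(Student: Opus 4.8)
The plan is to split the asserted identity into the two opposite inequalities and to reduce the nontrivial one to the Hahn--Banach representation (the first unnumbered Theorem above) combined with the $GPP$-type hypothesis and the Riesz-type Proposition.

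First I would settle the inequality $\Phi_x(\phi)\le\inf\{\int\phi\,d\mu:\mu\in J_x^S(X)\}$, which requires no hypothesis on $X$. Fix any $\mu\in J_x^S(X)$ and any $\psi\in S$ with $\psi\le\phi$. Since $\mu$ has compact support and $\phi,\psi$ are continuous, the relevant integrals are finite, and monotonicity of the integral together with the Jensen inequality $\psi(x)\le\int\psi\,d\mu$ give $\psi(x)\le\int\psi\,d\mu\le\int\phi\,d\mu$. Passing to the supremum over all admissible $\psi$ yields $\Phi_x(\phi)\le\int\phi\,d\mu$, and then the infimum over $\mu$ gives the bound.

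For the reverse inequality I would assume $\Phi_x(\phi)>-\infty$, since otherwise the stated dichotomy already holds. By the Hahn--Banach Theorem there is a positive linear functional $L$ on $C(X)$ with $L\ge\Phi_x$ and $L(\phi)=\Phi_x(\phi)$; finiteness of $\Phi_x(\phi)$ guarantees that $L$ is genuinely real-valued. Because $X$ is of $GPP$-type, there is a compact set $K\subset X$ such that $L(\eta)=0$ for every $\eta\in C(X)$ vanishing on $K$, and the Riesz Proposition then produces a finite Borel measure $\mu$ with $\supp\mu\subset K$ and $L(\eta)=\int\eta\,d\mu$ for all $\eta\in C(X)$. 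In particular $\mu$ has compact support.

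It remains to verify that $\mu$ is an admissible Jensen measure. For the total mass, observe that $1\in S$ while every $\psi\in S$ with $\psi\le1$ satisfies $\psi(x)\le1$, so $\Phi_x(1)=1$, and likewise $\Phi_x(-1)=-1$; applying $L\ge\Phi_x$ to $1$ and to $-1$ forces $\mu(X)=L(1)=1$, so $\mu$ is a probability measure. For the barycenter condition, fix $\psi\in S$; the trivial minorization $\psi\le\psi$ gives $\Phi_x(\psi)\ge\psi(x)$, whence $\int\psi\,d\mu=L(\psi)\ge\Phi_x(\psi)\ge\psi(x)$, so $\mu\in J_x^S(X)$. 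Finally $\int\phi\,d\mu=L(\phi)=\Phi_x(\phi)$ shows that the infimum on the right is at most $\Phi_x(\phi)$, and combined with the first step this yields equality (indeed the infimum is attained). The main obstacle is exactly the passage from the abstract dominating functional $L$ to a measure with genuinely \emph{compact} support: on a general normal space a positive functional need not be representable by such a measure, and this is precisely the content packaged into the $GPP$-type definition and the accompanying Riesz Proposition, which localize $L$ on a compact set before representing it.
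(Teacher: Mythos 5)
Your proof is correct and follows exactly the route the paper intends: the paper derives this theorem, without writing out details, by combining the Hahn--Banach duality formula for $\Phi_x$, the $GPP$-type localization onto a compact set, and the Riesz-type representation, which is precisely your assembly (including the verification that the resulting measure is a probability Jensen measure via $\Phi_x(\pm1)=\pm1$ and $\Phi_x(\psi)\ge\psi(x)$ for $\psi\in S$). No gaps; your argument even makes explicit that the infimum is attained when $\Phi_x(\phi)>-\infty$.
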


We have the following important corollary, which is a non-compact counterpart of Theorems II.11.1 and II.11.3 in \cite{Gam}.
\begin{corollary}\label{cor:10}
Let $D\subset\CC^n$ be a domain and let $\zeta\in\partial D$. We put $X=D\cup\{\zeta\}$ and
$S=\{\Re(f): f\in A(X)\}$, where $A(X)=H^\infty(D)\cap C_{\CC}(X)$.
Then the following conditions are equivalent:
\begin{enumerate}
\item $R_{\zeta}^S(X)=\{\delta_\zeta\}$;
\item there exists a function $f\in A(X)$ such that $f(\zeta)=1$ and $|f|<1$ on $D$, i.e., $\zeta$ is a peak point for $A(X)$;
\item there exist a $c\in(0,1)$ and an $M\ge1$ such that for any $r>0$ there exists an $f\in A(X)$ with
\begin{enumerate}
\item $\|f\|_{X}\le M$;
\item $f(\zeta)=1$;
\item $|f|\le c$ on $X\setminus\DD(\zeta,r)$.
\end{enumerate}
\end{enumerate}
\end{corollary}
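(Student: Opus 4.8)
The plan is to exploit that $S=\{\Re f:f\in A(X)\}$ is in fact a real-linear subspace of $C(X)$, since $A(X)$ is a complex algebra and hence $\Re(if)=-\Im f\in S$ whenever $\Re f\in S$. Consequently $\int\psi\,d\mu\ge\psi(\zeta)$ for all $\psi\in S$ is equivalent to $\int\psi\,d\mu=\psi(\zeta)$ for all $\psi\in S$, so $J_\zeta^S(X)=R_\zeta^S(X)$, and a measure in $R_\zeta^S(X)$ satisfies $\int f\,d\mu=f(\zeta)$ for every $f\in A(X)$. Since $X=D\cup\{\zeta\}$ is metrizable (hence normal) and of $GPP$-type by the earlier theorem, the non-compact Edwards theorem applies and gives, for every $\phi\in C(X)$ bounded below, $\Phi_\zeta(\phi)=\inf\{\int\phi\,d\mu:\mu\in J_\zeta^S(X)\}$. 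I will use this throughout. The implication $(2)\Rightarrow(1)$ is then immediate: if $f\in A(X)$ peaks at $\zeta$, then $1-\Re f\ge0$ on $X$ and vanishes only at $\zeta$, so for $\mu\in R_\zeta^S(X)$ we get $\int(1-\Re f)\,d\mu=1-\Re f(\zeta)=0$, forcing $\mu$ to be carried by $\{\zeta\}$, i.e. $\mu=\delta_\zeta$.

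For $(1)\Rightarrow(3)$ I would feed Edwards a family of bump targets. Assuming $R_\zeta^S(X)=\{\delta_\zeta\}$, the displayed formula collapses to $\Phi_\zeta(\phi)=\phi(\zeta)$ for every $\phi\in C(X)$ bounded below. Fix $r>0$ and $N>0$ and choose (by Tietze) a continuous $\phi$ with $\phi(\zeta)=0$, $-N\le\phi\le0$, and $\phi\equiv-N$ on $X\setminus\DD(\zeta,r)$. Then $\Phi_\zeta(\phi)=0$ produces $g\in A(X)$ with $\Re g=\psi\le\phi$ and $\psi(\zeta)>-\eps$. The function $e^{g}$ is a contraction ($|e^{g}|=e^{\Re g}\le1$) with $|e^{g}|\le e^{-N}$ off $\DD(\zeta,r)$ and $|e^{g}(\zeta)|>e^{-\eps}$; dividing by its value at $\zeta$ yields $f$ with $f(\zeta)=1$, $\|f\|_X\le e^{\eps}$ and $|f|\le e^{\eps-N}$ off $\DD(\zeta,r)$. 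Taking $\eps$ fixed (so $M=e^{\eps}$, as close to $1$ as desired) and $N=N(r)$ large gives $(3)$ for any prescribed $c\in(0,1)$.

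The converse $(3)\Rightarrow(1)$ is a measure-splitting argument. Let $\mu\in R_\zeta^S(X)$, so $\int f_r\,d\mu=f_r(\zeta)=1$ for the functions of $(3)$. Splitting the integral over $\DD(\zeta,r)\cap X$ and its complement and using $\|f_r\|_X\le M$, $|f_r|\le c$ off the disc gives $M\,\mu(\DD(\zeta,r)\cap X)\ge 1-c$, whence $\mu(\{\zeta\})=\lim_{r\to0}\mu(\DD(\zeta,r)\cap X)\ge(1-c)/M>0$. Writing $\mu=t\delta_\zeta+(1-t)\nu$ with $t=\mu(\{\zeta\})$ and $\nu(\{\zeta\})=0$, the identity $\int f_r\,d\mu=1$ forces $\int f_r\,d\nu=1$ whenever $t<1$; but the same splitting shows $\limsup_{r\to0}\lvert\int f_r\,d\nu\rvert\le c<1$, since $\nu(\DD(\zeta,r)\cap X)\to0$, a contradiction. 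Hence $t=1$ and $\mu=\delta_\zeta$.

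The remaining and genuinely hard implication is the construction of an honest peak function, say $(1)\Rightarrow(2)$. Applying Edwards to $\phi=-u$, where $u\in C(X)$ is a bounded regularized distance to $\zeta$ (so $u(\zeta)=0$ and $u>0$ on $D$), produces $g\in A(X)$ with $\Re g\le-u$ on $X$ and $\Re g(\zeta)$ as close to $0$ as we wish; then $e^{g}$ is strictly $<1$ on $D$ but takes a value of modulus slightly below $1$ at $\zeta$. The main obstacle is that the supremum defining $\Phi_\zeta(-u)=0$ need not be attained, so no single Edwards function peaks exactly, and any convex combination or superposition of such sub-peaks still has modulus $<1$ at $\zeta$. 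I expect to resolve this by a Bishop-type iteration across scales $r_k\downarrow0$: using $(1)\Rightarrow(3)$ to manufacture, at each scale, contractions equal to $1$ at $\zeta$ and uniformly small off $\DD(\zeta,r_k)$, and then assembling them into a convergent infinite product (or a rapidly weighted series) whose value at $\zeta$ is forced to be exactly $1$ while the uniform off-neighborhood bounds keep the modulus below $1$ on all of $D$. Controlling the accumulation of the small defects near $\zeta$ in this assembly — so that the exact boundary value survives in the limit while strict contraction is maintained inside — is the technical heart of the proof.
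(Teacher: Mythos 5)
Your reductions among (1), (2) and (3) are sound, and they follow exactly the route the paper intends: the paper offers no written proof of this corollary at all (it only points to Theorems II.11.1 and II.11.3 of Gamelin as the compact counterpart), the intended argument being precisely the combination of the $GPP$-type theorem for $X=D\cup\{\zeta\}$ with the non-compact Edwards theorem that you use. Your observation that $S$ is a real vector space, so $J_\zeta^S(X)=R_\zeta^S(X)$ and membership in $R_\zeta^S(X)$ forces $\int f\,d\mu=f(\zeta)$ for all $f\in A(X)$, is correct; so are $(2)\Rightarrow(1)$, the Edwards-plus-$e^{g}$ argument for $(1)\Rightarrow(3)$ (note that $\Phi_\zeta(\phi)>-\infty$ because the constant $-N$ lies in $S$, so the dichotomy in the non-compact Edwards theorem resolves the right way), and the measure-splitting proof of $(3)\Rightarrow(1)$.

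The genuine gap is that you never actually prove any implication landing in (2): your last paragraph is an announced plan, not an argument, and without it the three conditions are not shown equivalent. The gap is real but entirely fillable, and in fact the paper itself supplies the missing tool as the unnamed theorem of Section~4: if for every $\eps>0$ there is $f_\eps\in\OO(\Omega)\cap C(\Omega\cup\{\zeta\})$ with $|f_\eps|<1+\eps$, $f_\eps(\zeta)=1$ and $|f_\eps(\eta)|<r$ for some fixed $\eta\in\Omega$ and $r\in[0,1)$, then a weak peak function $F$ at $\zeta$ exists; the construction is the classical Bishop--Gamelin one, normalizing so that $f_k(\eta)=0$, setting $h=\sum_k 2^{-k}\bigl(1+\tilde f_k\bigr)/\bigl(1-\tilde f_k\bigr)$ with $\tilde f_k=f_k/(1+\eps_k)$, and taking $F=(h-1)/(h+1)$. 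Your own proof of $(1)\Rightarrow(3)$ already produces the required family: it gives $f$ with $\|f\|_X\le e^{\eps}$, $f(\zeta)=1$ and $|f|\le c<1$ off $\DD(\zeta,\rho)$, hence $|f(\eta)|\le c$ for any fixed $\eta\in D$ once $\rho$ is small. Feeding these into that theorem closes $(1)\Rightarrow(2)$ (equivalently $(3)\Rightarrow(2)$), since the resulting $F$ is bounded by $1$, hence lies in $A(X)$. You should either carry out this series construction explicitly or cite it; the convex-combination worry you raise is exactly why the construction passes through the half-plane map $w\mapsto(1+w)/(1-w)$ rather than averaging the $f_k$ directly.
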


\section{Carath\'eodory completeness on the complex plane}

Before we state main results of this section, let us recall some notions and results from one-dimensional analysis.

Let $D\subset\CC$ be a domain and let $\zeta\in\partial D$. Recall that if
$$
\limsup_{r\to0}\frac{\LL(\DD(\zeta,r)\setminus D)}{r^2}>0
$$
then $\zeta$ is a peak point for $A(\overline{D})=\OO(D)\cap C(\overline{D})$. Here, $\LL$ denotes the Lebesgue
measure on the complex plane. Essentially, its Curtis's Criterion and the relation between analytic capacity and the Lebesgue
measure (see e.g. \cite{Gam}, Corollary VIII.4.2).

So, if $\zeta$ is not a peak point for $A(\overline{D})$ then
\begin{equation}\label{eq:5}
\lim_{r\to0}\frac{\LL(\DD(\zeta,r)\cap D)}{\pi r^2}=1.
\end{equation}

Let $\MM$ denotes the set of all positive probability measure in $\CC$ with compact support and let
$\mu\in\MM$.
We define its Newton potential as $M(\xi)=\int\frac{1}{|w-\xi|}d\mu(w)$. The following result is a corollary of Fubini's theorem
(see e.g. \cite{Sto}, Lemma 26.16).
\begin{prop}\label{prop:11} For any $\zeta\in\CC$ we have
$$
\lim_{r\to0}\frac{1}{\pi r^2}\int_{\DD(\zeta,r)}|w-\zeta|\cdot M(w)d\LL(w)=\mu(\{\zeta\}).
$$
In particular, if $\mu(\{\zeta\})=0$ then for any $\epsilon>0$ the set
$$
\Pi(\epsilon)=\{w\in\CC: |w-\zeta|\cdot M(w)>\epsilon\}
$$
has the property
$$
\lim_{r\to0}\frac{\LL(\Pi(\epsilon)\cap\DD(\zeta,r))}{\pi r^2}=0.
$$
\end{prop}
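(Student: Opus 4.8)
The plan is to reduce the statement to a single pointwise computation via Fubini's theorem, and then to control the interchange of limit and integral through a uniform bound obtained from a rearrangement inequality.

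First I would unfold the Newton potential and apply Tonelli's theorem (the integrand is nonnegative) to write
$$
\frac{1}{\pi r^2}\int_{\DD(\zeta,r)}|w-\zeta|\,M(w)\,d\LL(w)
=\int F_r(v)\,d\mu(v),\qquad
F_r(v)=\frac{1}{\pi r^2}\int_{\DD(\zeta,r)}\frac{|w-\zeta|}{|v-w|}\,d\LL(w).
$$
The heart of the matter is the behaviour of $F_r(v)$ as $r\to0$. When $v=\zeta$ the integrand is identically $1$, so $F_r(\zeta)=1$ for every $r$. When $v\neq\zeta$, set $d=|v-\zeta|$; for $r<d/2$ and $w\in\DD(\zeta,r)$ one has $|w-\zeta|\le r$ and $|v-w|\ge d/2$, whence $F_r(v)\le 2r/d\to0$. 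Thus $F_r(v)\to\mathbf 1_{\{v=\zeta\}}$ pointwise, and it remains to pass this limit under the $\mu$-integral.

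The key estimate, and the step I expect to be the main obstacle, is a uniform domination of $F_r$. I claim $F_r(v)\le 2$ for all $v$ and all $r$. Indeed $|w-\zeta|\le r$ on $\DD(\zeta,r)$, so it suffices to show $\int_{\DD(\zeta,r)}|v-w|^{-1}\,d\LL(w)\le 2\pi r$. Since $w\mapsto|v-w|^{-1}$ is radially decreasing about $v$, the Hardy--Littlewood rearrangement (bathtub) principle bounds this integral over any set of measure $\pi r^2$ by the integral over the disc $\DD(v,r)$ of equal measure, and $\int_{\DD(v,r)}|v-w|^{-1}\,d\LL(w)=\int_0^r s^{-1}\cdot2\pi s\,ds=2\pi r$. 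This gives the claim.

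With the uniform bound in hand I would favour a splitting argument, since it avoids invoking dominated convergence for a family indexed by the continuous parameter $r\to0$. The lower bound is immediate: restricting the $\mu$-integral to $\{\zeta\}$ gives $\int F_r\,d\mu\ge F_r(\zeta)\,\mu(\{\zeta\})=\mu(\{\zeta\})$. For the upper bound, fix $\delta>0$ and split $\int F_r\,d\mu$ over $\{\zeta\}$, over $\DD(\zeta,\delta)\setminus\{\zeta\}$, and over $\CC\setminus\DD(\zeta,\delta)$; the three pieces are respectively $\mu(\{\zeta\})$, at most $2\mu(\DD(\zeta,\delta)\setminus\{\zeta\})$ by the uniform bound, and (for $r<\delta/2$) at most $(2r/\delta)\,\mu(\CC)\to0$ by the far estimate above. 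Letting first $r\to0$ and then $\delta\to0$ yields $\limsup_{r\to0}\int F_r\,d\mu\le\mu(\{\zeta\})$, which together with the lower bound proves the first assertion. For the ``in particular'' part, when $\mu(\{\zeta\})=0$ the limit is $0$; a Chebyshev-type inequality gives $\epsilon\,\LL(\Pi(\epsilon)\cap\DD(\zeta,r))\le\int_{\DD(\zeta,r)}|w-\zeta|\,M(w)\,d\LL(w)$, and dividing by $\pi r^2$ and letting $r\to0$ finishes the argument.
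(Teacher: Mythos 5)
Your proof is correct, and it follows exactly the route the paper indicates: the paper gives no proof of Proposition~\ref{prop:11}, only the remark that it ``is a corollary of Fubini's theorem'' with a reference to Stout's Lemma~26.16, and your argument is precisely that Fubini/Tonelli computation carried out in full (the pointwise limits $F_r(\zeta)=1$, $F_r(v)\le 2r/|v-\zeta|$, the uniform bound $F_r\le 2$ via rearrangement, and the splitting plus Chebyshev step are all sound). Nothing to correct.
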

As a Corollary of Proposition~\ref{prop:11} and \eqref{eq:5} we get the following
\begin{corol}\label{cor:12} Let $D\subset\CC$ be a domain, let $\zeta\in\partial D$, and let $\mu$ be a Borel measure with compact
support in $D\cup\{\zeta\}$ such that $\mu(\{\zeta\})=0$. Assume that $\zeta$ is not a peak point for $A(\overline D)$. Then
$$
\lim_{r\to0}\frac{\LL(D\cap\DD(\zeta,r)\setminus \Pi(\epsilon))}{\pi r^2}=1.
$$
\end{corol}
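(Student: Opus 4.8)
The plan is to bound the Lebesgue measure of the portion of the disc $\DD(\zeta,r)$ that fails to lie in $D\cap\DD(\zeta,r)\setminus\Pi(\epsilon)$, and to show this ``bad'' portion is asymptotically negligible. The starting point is the elementary set identity
$$
\DD(\zeta,r)\setminus\bigl(D\cap\DD(\zeta,r)\setminus\Pi(\epsilon)\bigr)=\bigl(\DD(\zeta,r)\setminus D\bigr)\cup\bigl(\DD(\zeta,r)\cap\Pi(\epsilon)\bigr),
$$
obtained by simple distributivity: a point of the disc misses our set precisely when it lies outside $D$ or inside the exceptional set $\Pi(\epsilon)$. This splits the complement into two pieces, each of which is controlled by one of the earlier results.

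From here I would use $\LL(\DD(\zeta,r))=\pi r^2$, apply subadditivity of $\LL$ to the right-hand union, and divide by $\pi r^2$, yielding
$$
1-\frac{\LL\bigl(D\cap\DD(\zeta,r)\setminus\Pi(\epsilon)\bigr)}{\pi r^2}\le\frac{\LL\bigl(\DD(\zeta,r)\setminus D\bigr)}{\pi r^2}+\frac{\LL\bigl(\DD(\zeta,r)\cap\Pi(\epsilon)\bigr)}{\pi r^2}.
$$
Since $\zeta$ is not a peak point for $A(\overline D)$, relation \eqref{eq:5} gives $\LL(\DD(\zeta,r)\cap D)/(\pi r^2)\to1$, and because $\DD(\zeta,r)$ is the disjoint union of $\DD(\zeta,r)\cap D$ and $\DD(\zeta,r)\setminus D$, the first term on the right tends to $0$. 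Because $\mu(\{\zeta\})=0$, the second term tends to $0$ by Proposition~\ref{prop:11}. Hence the right-hand side vanishes as $r\to0$, which forces $\liminf_{r\to0}\LL(D\cap\DD(\zeta,r)\setminus\Pi(\epsilon))/(\pi r^2)\ge1$.

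For the reverse inequality I would simply note that $D\cap\DD(\zeta,r)\setminus\Pi(\epsilon)\subset\DD(\zeta,r)$, so the ratio never exceeds $1$; combining this with the $\liminf$ bound gives that the limit exists and equals $1$. There is no genuine difficulty here: the argument is just subadditivity followed by a squeeze. The only point demanding attention is the set-theoretic bookkeeping in the first display, making sure that the two error terms produced are \emph{exactly} the quantities governed by \eqref{eq:5} and by Proposition~\ref{prop:11}, so that both may be sent to zero simultaneously with no residual term left over.
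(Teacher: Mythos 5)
Your argument is correct and is exactly the one the paper intends: the paper gives no written proof, stating only that the corollary follows from Proposition~\ref{prop:11} and \eqref{eq:5}, and your decomposition of the disc into $(\DD(\zeta,r)\setminus D)\cup(\DD(\zeta,r)\cap\Pi(\epsilon))$ followed by subadditivity and the squeeze is precisely how those two facts combine. Nothing further is needed.
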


The main result of this section is the following.

\begin{thm}\label{thm:2} Let $D\subset\CC^n$ be a domain. Consider the following conditions:
\begin{enumerate}
\item\label{3} for any $\zeta\in\partial D$ there exist no a Borel probability measure $\mu$ with compact support in $D\cup\{\zeta\}$ such that $\mu\not=\delta_{\zeta}$ and
$$
|f(\zeta)|\le\int_{D\cup\{\zeta\}} |f(w)| d\mu(w)\quad\text{ for any }f\in A(D\cup\{\zeta\}).
$$
\item\label{3a} for any $\zeta\in\partial D$  there exist no a Borel probability measure $\mu$ with compact support in $D\cup\{\zeta\}$
such that $\mu\not=\delta_{\zeta}$ and
$$
f(\zeta)=\int_{D\cup\{\zeta\}} f(w)d\mu(w)\quad\text{ for any }f\in A(D\cup\{\zeta\}).
$$
\item\label{4} there exists an $f\in A(D\cup\{\zeta\})$ such that $f(\zeta)=1$ and $|f|<1$ on $D$.
\item\label{2} $D$ is $c$-finitely compact.
\item\label{1} $D$ is $c$-complete.
\end{enumerate}
Then $\eqref{3}\implies\eqref{3a}\implies\eqref{4}\implies\eqref{2}\implies\eqref{1}$. Moreover,
if $n=1$ then $\eqref{1}\implies\eqref{3}$ and, therefore, all the above conditions are equivalent.
\end{thm}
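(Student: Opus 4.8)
The plan is to establish the chain $\eqref{3}\implies\eqref{3a}\implies\eqref{4}\implies\eqref{2}\implies\eqref{1}$ by soft arguments and to reserve the real work for the one-dimensional implication $\eqref{1}\implies\eqref{3}$. For $\eqref{3}\implies\eqref{3a}$ I would simply note that a representing measure is automatically a dominating one: if $f(\zeta)=\int f\,d\mu$ for all $f\in A(D\cup\{\zeta\})$, then $|f(\zeta)|=\lvert\int f\,d\mu\rvert\le\int|f|\,d\mu$, so any measure witnessing the failure of $\eqref{3a}$ also witnesses the failure of $\eqref{3}$; contrapositively $\eqref{3}\implies\eqref{3a}$. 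For $\eqref{3a}\implies\eqref{4}$ I would invoke Corollary~\ref{cor:10} with $X=D\cup\{\zeta\}$ and $S=\{\Re(f):f\in A(X)\}$. Since $A(X)$ is closed under multiplication by $i$, a probability measure representing $\zeta$ on $S$ (that is, $\Re f(\zeta)=\int\Re f\,d\mu$ for all $f$) automatically satisfies $f(\zeta)=\int f\,d\mu$; hence $R_\zeta^S(X)=\{\delta_\zeta\}$ is exactly condition $\eqref{3a}$, and the equivalence $(1)\Leftrightarrow(2)$ of Corollary~\ref{cor:10} produces the peak function in $\eqref{4}$.

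For $\eqref{4}\implies\eqref{2}$ the mechanism is that a weak peak function forces the Carath\'eodory distance to blow up at the boundary. Given $\zeta\in\partial D$ with peak function $f$, $f(\zeta)=1$ and $|f|<1$ on $D$, we have $f\in\OO(D,\DD)$, so $c_D(z_0,z)\ge\rho(f(z_0),f(z))$ for any fixed $z_0$; as $z\to\zeta$ continuity gives $f(z)\to1\in\partial\DD$, whence $c_D(z_0,z)\to+\infty$. Thus no $c_D$-ball can accumulate at a finite boundary point, and together with a routine exclusion of escape to infinity one obtains relative compactness of $c_D$-balls, i.e. $\eqref{2}$; the implication $\eqref{2}\implies\eqref{1}$ is the well-known fact that then every $c_D$-Cauchy sequence lies in a compact subset of $D$ and so converges. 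I would also record here a free bonus: $\eqref{4}\implies\eqref{3}$ in any dimension. Indeed, applying the domination to $f^n$ gives $1=|f(\zeta)|^n\le\int|f|^n\,d\mu\to\mu(\{\zeta\})$ by dominated convergence (as $|f|<1$ off $\zeta$), forcing $\mu=\delta_\zeta$; hence $\eqref{3}$, $\eqref{3a}$, $\eqref{4}$ are in fact equivalent unconditionally, and the remaining task reduces to $\eqref{1}\implies\eqref{4}$ for $n=1$.

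The heart of the theorem is therefore $\eqref{1}\implies\eqref{3}$ in the plane, which I would prove in contrapositive form: assuming a boundary point $\zeta$ carries a dominating measure $\mu\neq\delta_\zeta$, I would construct a $c_D$-Cauchy sequence $z_k\to\zeta$, contradicting completeness. After splitting off the atom at $\zeta$ (the inequality $|f(\zeta)|\le\int|f|\,d\mu$ survives division by $1-\mu(\{\zeta\})>0$) I may assume $\mu(\{\zeta\})=0$. Since $\neg\eqref{3}$ is equivalent to $\zeta$ not being a peak point for $A(D\cup\{\zeta\})$, the point $\zeta$ is not a peak point for $A(\overline D)$ either, so \eqref{eq:5} holds and $D$ has full area density at $\zeta$. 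Feeding $\mu$ into Proposition~\ref{prop:11} and Corollary~\ref{cor:12}, the good sets $D\cap\DD(\zeta,r)\setminus\Pi(\epsilon)$, on which the Newton potential obeys $|w-\zeta|\,M(w)\le\epsilon$, still have density tending to $1$ as $r\to0$. The decisive step is then to convert this thinness of the complement, read through the Cauchy transform $\widehat\mu$ with $|\widehat\mu|\le M$, into a uniform oscillation bound: I would show that every $g\in\OO(D,\DD)$ varies by an arbitrarily small amount between points of these good sets as $r\to0$, and pick $z_k$ from successive good sets so that $\sup_{g}\rho(g(z_k),g(z_l))\to0$.

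\emph{The main obstacle} is exactly this last estimate: producing a modulus of continuity for the entire family $\OO(D,\DD)$ near $\zeta$ out of the measure-theoretic data. This is the place where Sibony and Selby appealed to Melnikov's theorem and analytic capacity, and the goal of the present approach is to replace that input by the measure furnished by the generalized Edwards theorem together with the potential-theoretic Proposition~\ref{prop:11} and Corollary~\ref{cor:12}. Making the passage from ``$|w-\zeta|\,M(w)$ small on a set of density one'' to ``$c_D(z_k,z_l)\to0$'' quantitative and uniform in $g$ is where the genuine one-dimensional analysis is concentrated, and I expect it to be the only step requiring more than formal manipulation.
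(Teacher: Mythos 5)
Your treatment of the easy implications matches the paper: $\eqref{3}\implies\eqref{3a}$ by taking absolute values, $\eqref{3a}\implies\eqref{4}$ via Corollary~\ref{cor:10} (with the correct remark that closure of $A(X)$ under multiplication by $i$ turns representing measures for $S=\{\Re f\}$ into representing measures for $A(X)$), and $\eqref{4}\implies\eqref{2}\implies\eqref{1}$ by the standard blow-up of $\rho(f(z_0),f(z))$ at a weak peak point. Your ``bonus'' $\eqref{4}\implies\eqref{3}$ via $|f(\zeta)|^n\le\int|f|^n\,d\mu\to\mu(\{\zeta\})$ is correct and is a clean way to see that $\eqref{3}$, $\eqref{3a}$, $\eqref{4}$ are equivalent in every dimension; it is also what justifies, in the contrapositive of $\eqref{1}\implies\eqref{3}$, that $\zeta$ is not a peak point for $A(\overline D)$, so that \eqref{eq:5} and Corollary~\ref{cor:12} apply. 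Up to this point you are aligned with the paper.

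However, for the implication $\eqref{1}\implies\eqref{3}$ you stop exactly at the step that carries the whole theorem, and you say so yourself: you never show how the existence of the dominating measure $\mu$ yields a uniform oscillation bound for $f\in\OO(D,\DD)$ between points where $|w-\zeta|\,M(w)$ is small. This is a genuine gap, and the missing idea is short: apply the domination inequality not to $f$ but to the \emph{difference quotient}. For $\eta\in D$ set $g(z)=\frac{f(z)-f(\eta)}{z-\eta}$; this is again a bounded holomorphic function on $D$ (continuous at $\zeta$ after approximating $f$ by functions analytic near $\zeta$, which is where the paper invokes a bounded approximation scheme with a universal constant), and the inequality $|g(\zeta)|\le\int|g|\,d\mu$ reads
$$
\frac{|f(\zeta)-f(\eta)|}{|\zeta-\eta|}\le\int_D\frac{|f(w)|+|f(\eta)|}{|w-\eta|}\,d\mu(w)\le 2\|f\|_\infty M(\eta),
$$
i.e.\ $|f(\zeta)-f(\eta)|\le C\|f\|_\infty\,|\zeta-\eta|\,M(\eta)$. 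This is precisely the quantitative modulus of continuity at $\zeta$, uniform over the unit ball of $H^\infty(D)$, that you identified as the obstacle; the Newton potential $M(\eta)$ appears because $\mu$ is integrated against the kernel $|w-\eta|^{-1}$ coming from the denominator $z-\eta$. Corollary~\ref{cor:12} then supplies $\eta_\nu\to\zeta$ with $|\zeta-\eta_\nu|M(\eta_\nu)\le 2^{-\nu}$, the triangle inequality through $\zeta$ gives $|f(\eta_\nu)-f(\eta_{\nu'})|\le C\|f\|_\infty(2^{-\nu}+2^{-\nu'})$, and after normalizing $f(\eta_\nu)=0$ by a M\"obius map this bounds $\rho(f(\eta_\nu),f(\eta_{\nu'}))$, so $\{\eta_\nu\}$ is $c$-Cauchy without a limit in $D$, contradicting $\eqref{1}$. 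Note also that this argument crucially uses $n=1$: the division by $z-\eta$ has no analogue in several variables, which is why the theorem restricts the implication $\eqref{1}\implies\eqref{3}$ to the plane. Without this difference-quotient step your proposal does not constitute a proof of the one nontrivial implication.
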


\begin{proof}[Proof of Theorem \ref{thm:2}] Note that the implications $\eqref{3}\implies\eqref{3a}$ and
$\eqref{4}\implies\eqref{2}\implies\eqref{1}$ are immediate. The implication $\eqref{3a}\implies\eqref{4}$ follows from
Corollary~\ref{cor:10}.

Assume that $n=1$. Let us prove $\eqref{1}\implies\eqref{3}$. Assume that there exists a positive probability measure $\mu$ such that $\mu(D)=1$ and
$$
|f(\zeta)|\le\int_D |f|d\mu\quad\text{ for any }f\in A(D\cup\{\zeta\}).
$$
Fix $f\in A(D\cup\{\zeta\})$.  Then there exists a sequence $f_n\in H^\infty(D)$ such that $\|f_n\|_{D}\le 17\|f\|_{D}$, $f_n$ extends to be analytic in a neighborhood of $\zeta$ and $f_n$ converges uniformly to $f$ on any set of type $D\setminus\DD(\zeta,\epsilon)$, where $\epsilon>0$.

For any $\eta\in D$ we put
$$
g_n(z)=\frac{f_n(z)-f_n(\eta)}{z-\eta}. 
$$
Note that $g_n\in H^\infty(D\cup\{\zeta\})$.
Then
$$
|g_n(\zeta)|\le\int_{D}|g_n(w)|d\mu(w)\le 2\|f_n\|_{\infty} M(\eta)\le 34\|f\|_{\infty} M(\eta)
$$
and, therefore,
$$
|f_n(\zeta)-f_n(\eta)|\le |\zeta-\eta|\cdot 2\|f_n\|_{\infty} M(\eta)\le |\zeta-\eta|\cdot 34\|f\|_{\infty} M(\eta).
$$
For any $\eta_1,\eta_2\in D$ we have
$$
|f(\eta_1)-f(\eta_2)|\le 34\|f\|_{\infty}\cdot\big(|\zeta-\eta_1|\cdot  M(\eta_1)+|\zeta-\eta_2|\cdot M(\eta_2)\big).
$$
According to Corollary~\ref{cor:12} we can take a sequence $\{\eta_\nu\}$ such that $\eta_\nu\to\zeta$ and  $|\zeta-\eta_\nu|\cdot  M(\eta_\nu)\le\frac{1}{2^\nu}$. Then $\{\eta_\nu\}$ is a c-Cauchy sequence. A contradiction.
\end{proof}

\section{The Reinhardt domains}
A domain $D\subset\CC^n$ is called \emph{Reinhardt} if $(\lambda_1 z_1,\dots,\lambda_n z_n)\in D$ for all points
$z=(z_1,\dots,z_n)\in D$ and any $|\lambda_1|=\dots=|\lambda_n|=1$.
Let us denote
$$
V_j=\{z\in\CC^n: z_j=0\}, \quad j=1,\dots,n.
$$

Recall the following result of W.~Zwonek (see \cite{Zwo1}, Theorem~2).

\begin{theorem}\label{thm:14} Let $D$ be a hyperbolic pseudoconvex Reinhardt domain. Then the
following conditions are equivalent:
\begin{enumerate}
\item $D$ is c-finitely compact;
\item $D$ is c-complete;
\item $D$ is bounded and for any $j\in\{1,\dots,n\}$
\begin{equation}\label{eq:3:1}
\text{if }\overline{D}\cap V_j\not=\varnothing,\text{ then } D\cap V_j\not=\varnothing.
\end{equation}
\end{enumerate}
\end{theorem}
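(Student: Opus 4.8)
The plan is to establish the cycle $(1)\Rightarrow(2)\Rightarrow(3)\Rightarrow(1)$, where $(1)\Rightarrow(2)$ is immediate since every $c$-finitely compact domain is $c$-complete. The whole argument is organized around the logarithmic image $\log D=\{(\log\abs{z_1},\dots,\log\abs{z_n}):z\in D,\ z_1\cdots z_n\neq0\}\subset\RR^n$, which is convex because $D$ is pseudoconvex Reinhardt, and around the fact that the relevant members of $\OO(D,\DD)$ are normalized Laurent monomials $z^\alpha=z_1^{\alpha_1}\cdots z_n^{\alpha_n}$, $\alpha\in\ZZ^n$: such a monomial is holomorphic and bounded on $D$ exactly when $\alpha_j\ge0$ for every $j$ with $\overline D\cap V_j\neq\varnothing$ and the functional $x\mapsto\langle\alpha,x\rangle$ is bounded above on $\log D$.

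For $(3)\Rightarrow(1)$ I would use Pflug's criterion (Theorem~\ref{thm:1:pflug}), which applies since $D$ is $c$-hyperbolic. Let $\{z^{(\nu)}\}\subset D$ have no accumulation point in $D$; as $D$ is bounded we may pass to a subsequence converging to some $\zeta\in\partial D$, and it suffices to produce $f\in\OO(D,\DD)$ that is continuous at $\zeta$ with $\abs{f(\zeta)}=1$, whence $\sup_\nu\abs{f(z^{(\nu)})}=1$. If $\zeta_j\neq0$ for all $j$, convexity of $\log D$ gives a supporting functional at $\log\abs{\zeta}$, i.e. a direction $\alpha$ with $\langle\alpha,x\rangle\le s:=\langle\alpha,\log\abs{\zeta}\rangle$ on $\log D$; rational approximation and clearing denominators make $\alpha$ integral, and boundedness of $D$ together with \eqref{eq:3:1} lets me arrange the sign conditions $\alpha_j\ge0$ on the relevant axes, so that $f=e^{-s}z^\alpha$ is a bounded peak function at $\zeta$. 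If instead $\zeta_j=0$ for $j$ in some nonempty $I$, then $\overline D\cap V_j\neq\varnothing$, so \eqref{eq:3:1} forces $D\cap V_j\neq\varnothing$; the domain then genuinely contains the coordinate slice, and I would peak by combining a monomial in the remaining variables with the good structure along $V_j$, reducing to the already-treated situation in lower dimension.

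For $(2)\Rightarrow(3)$ I would argue by contraposition, showing that an unbounded domain, or one violating \eqref{eq:3:1}, cannot be $c$-complete. The mechanism is an extension phenomenon for bounded functions: let $\widetilde D$ be the smallest pseudoconvex Reinhardt domain containing $D$ with $\widetilde D\cap V_j\neq\varnothing$ whenever $\overline{\widetilde D}$ touches $V_j$. A Laurent-coefficient estimate shows every $f\in H^\infty(D)$ extends to $\widetilde D$ (if $\overline D$ touches $V_j$ but $D$ misses it, negative powers of $z_j$ would blow up, forcing the offending coefficients to vanish), so $\OO(D,\DD)=\OO(\widetilde D,\DD)\vert_D$ and hence $c_D=c_{\widetilde D}\vert_{D\times D}$. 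If \eqref{eq:3:1} fails for some $j$, pick $z^{(\nu)}\in D$ with $z^{(\nu)}_j\to0$; this converges in $\widetilde D$ to an interior point lying in $V_j\setminus D$, so it is $c_{\widetilde D}$-Cauchy, hence $c_D$-Cauchy, yet has no limit in $D$. The unbounded case is analogous, the recession directions of the convex set $\log D$ providing the escaping Cauchy sequence, with hyperbolicity used to exclude degenerate unbounded pieces such as full affine lines in $\log D$.

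The main obstacle I expect is the coordinate-hyperplane case of $(3)\Rightarrow(1)$: for a boundary point $\zeta$ with several vanishing coordinates, producing one bounded holomorphic function continuous at $\zeta$ with $\abs{f(\zeta)}=1$. A supporting functional of $\log D$ at such a $\zeta$ typically has components in the $-e_j$ directions that must be suppressed, and it is precisely \eqref{eq:3:1} that permits replacing the offending negative exponents by nonnegative ones without destroying the peaking property; making this replacement uniform along the sequence and checking continuity of $f$ up to $\zeta$ along $V_j$ is the delicate point. Everything else reduces to the convex geometry of $\log D$ and the Laurent-series description of $H^\infty(D)$.
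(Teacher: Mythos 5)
First, a remark on context: the paper does not actually prove this theorem --- it is quoted from Zwonek \cite{Zwo1}. The closest the paper comes is Section 4, where it proves that condition (3) implies every boundary point is a weak peak point; your sketch of $(3)\Rightarrow(1)$ overlaps with exactly that construction, so the comparison below is against Section 4 and against Zwonek's argument.

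The genuine gap is in your treatment of a boundary point $\zeta$ with $\zeta_1\cdots\zeta_n\neq0$. You take a supporting functional $\alpha$ of the convex set $\log D$ at $\log\abs{\zeta}$ and assert that ``rational approximation and clearing denominators make $\alpha$ integral,'' after which $f=e^{-s}z^\alpha$ is an exact peak function. This step fails in general: if the supporting hyperplane has irrational direction (e.g.\ $\log D$ has, near $\log\abs{\zeta}$, a face $\{\langle\xi,x\rangle=c\}$ with the ratios $\xi_i/\xi_j$ irrational --- precisely the situation of the paper's final Example with $|z|\,|w|^\alpha<1$), then \emph{no} nonzero integer vector supports $\log D$ at that point, and any integral perturbation of $\xi$ gives a monomial $g$ with $g(\zeta)=1$ but $\|g\|_D>1$. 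So what the construction actually yields is only a family $f_\epsilon$ with $\|f_\epsilon\|_D\le1+\epsilon$, $f_\epsilon(\zeta)=1$, $|f_\epsilon(\eta)|\le r<1$; and no single normalization $f_\epsilon/(1+\epsilon)$ satisfies Pflug's criterion, since its supremum along the sequence is only $1/(1+\epsilon)<1$. The missing ingredient is the gluing step that the paper isolates as a separate theorem in Section 4: from the family $f_{\epsilon_k}$ one forms $h=\sum_k 2^{-k}\frac{1+\tilde f_k}{1-\tilde f_k}$ and $F=\frac{h-1}{h+1}$ to obtain an honest $F\in\OO(D,\DD)$, continuous at $\zeta$, with $F(\zeta)=1$. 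Without this (or an equivalent Bishop-type lemma) the implication $(3)\Rightarrow(1)$ is not established.

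Two smaller points. In the case of vanishing coordinates you defer to ``the already-treated situation in lower dimension,'' but the delicate issue there is not the sign of the exponents (which is automatic: if $D\cap V_j\neq\varnothing$ then $\log D$ is unbounded in the direction $-e_j$, so any functional bounded above on $\log D$ already has $\alpha_j\ge0$) but continuity of the resulting function up to $\zeta$ along $V_j$ and the fact that a supporting functional for the projected domain need not control the omitted variables; this needs to be written out. Your $(2)\Rightarrow(3)$ via the $H^\infty$-extension $H^\infty(D)=H^\infty(\widetilde D)\vert_D$ and the resulting identity $c_D=c_{\widetilde D}\vert_{D\times D}$ is the right mechanism (it is essentially Zwonek's), but you should also justify that the escaping sequence has no $c_D$-limit inside $D$, which uses hyperbolicity of $\widetilde D$, and the unbounded case is asserted rather than argued.
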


All we have to prove is that condition $(3)$ in Theorem~\ref{thm:14} implies that any $\zeta\in\partial D$ is a weak peak point.
Without loss of generality we may assume that
\begin{enumerate}[(i)]
\item $D\subset\DD^n$;
\item $D\cap V_j\not=\varnothing$, $j=1,\dots,m$.
\item $\overline{D}\cap V_j=\varnothing$, $j=m+1,\dots,n$.
\end{enumerate}

Fix a boundary point $\zeta=(\zeta_1,\dots,\zeta_n)\in\partial D$.
First assume that $\zeta_1\cdot\dots\cdot\zeta_n\not=0$. Then there exist
$\xi_1,\dots,\xi_n\in\RR$ such that $\phi(z)<\phi(\zeta)$ for any $z\in D$, where
$\phi(z)=|z_1|^{\xi_1}\cdot\dots\cdot|z_n|^{\xi_n}$.
Note that $\xi_1,\dots,\xi_m\ge0$ and that there exists a $\delta_0>0$ such that for any
$z\in D$ we have $|z_j|\ge \delta_0$ for $j=m+1,\dots,n$. Without loss of generality we may
assume that $\xi_1,\dots,\xi_\ell>0$ and $\xi_{\ell+1}=\dots=\xi_m=0$.

Fix a point $\eta\in D$ such that $\eta_1\cdot\ldots\cdot \eta_n\not=0$.
Put $R=\frac{\phi(\eta)}{\phi(\zeta)}<1$. We want to show that
there exists a sequence $g_k\in\OO(D)\cap C(D\cup\{\zeta\})$ such that
\begin{enumerate}
\item $\|g_k\|_{D}\to1$ when $k\to\infty$;
\item $g_k(\zeta)=1$ for any $k\ge1$;
\item $|g_k(\eta)|\le R$ for any $k\ge1$.
\end{enumerate}

Fix $\epsilon>0$. There exist $\beta_{1},\dots,\beta_{n}\in\RR$ and $q\in\NN$, $q\ge2$,
such that $\sign\beta_j=\sign \xi_j$ and
$$
|q\xi_{j}-\beta_{j}|\le \epsilon\quad\text{ for any } j=1,\dots,n.
$$
Put $f(z)=z_1^{\beta_{1}}\cdot\dots\cdot z_n^{\beta_{n}}$ and $g(z)=\frac{f(z)}{f(\zeta)}$.
We have
$$
|f(z)|=\phi(z)^{q}|z_1|^{\beta_{1}-\xi_1 q}\cdot\dots\cdot |z_n|^{\beta_{n}-\xi_n q}.
$$
Hence,
$$
|g(z)|=\left(\frac{\phi(z)}{\phi(\zeta)}\right)^{q}\cdot
\left(\frac{|z_1|}{|\zeta_1|}\right)^{\beta_{1}-\xi_1 q}
\cdot\ldots\cdot
\left(\frac{|z_n|}{|\zeta_n|}\right)^{\beta_{n}-\xi_n q}
$$
and, in particular,
$$
|g(z)|\le\left(\frac{|z_1|}{|\zeta_1|}\right)^{\beta_{1}-\xi_1 q}
\cdot\ldots\cdot
\left(\frac{|z_n|}{|\zeta_n|}\right)^{\beta_{n}-\xi_n q}.
$$

$1^o$ Let us show that $|g(\eta)|<R$. Indeed,
$$
|g(\eta)|\le R^q\cdot
\left(\prod_{j=1}^n \max\Big\{\frac{|\eta_j|}{|\zeta_j|}, \frac{|\zeta_j|}{|\eta_j|}\Big\}\right)^{\epsilon}.
$$
For sufficiently small $\epsilon>0$ we have $|g(\eta)|<R$.

$2^o$ Let us now estimate $\|g\|_{D}$. There exists $\delta_1>$ such that for any $z\in G$ with
$\min\{|z_1|,\dots,|z_\ell|\}\le\delta_1$ we have
$$
|z_1|^{\beta_1}\cdot\ldots\cdot |z_n|^{\beta_n}\le\phi(\zeta)^{q}.
$$
Note that
$$
|z_1|^{\beta_1}\cdot\ldots\cdot |z_n|^{\beta_n}\le
|z_1|^{\beta_1}\cdot\ldots\cdot |z_\ell|^{\beta_\ell}\cdot \delta_0^{\sum_{j=m+1}^n \beta_j}\le
\delta_1^{q\min\{\zeta_1,\dots,\zeta_\ell\}-\epsilon}\cdot \delta_0^{q\sum_{j=m+1}^n (\zeta_j-\frac{\epsilon}{q})}.
$$
For sufficiently small $\delta_1>0$ we have
$$
\delta_1^{\min\{\zeta_1,\dots,\zeta_\ell\}-\frac{\epsilon}{2}}\cdot \delta_0^{\sum_{j=m+1}^n (\zeta_j-\frac{\epsilon}{2})}
\le\phi(\zeta).
$$
Fix such a $\delta_1>0$. Take $z\in D$ such that $\min\{|z_1|,\dots,|z_\ell|\}\le\delta_1$. Then $|f(z)|\le\phi(\zeta)^q$.
Therefore,
$$
|g(z)|\le \prod_{j=1}^n |\zeta_j|^{q\zeta_j-\beta_j}\le
\left(\frac{1}{\prod_{j=1}^n |\zeta_j|}\right)^\epsilon.
$$
Take $z\in D$ such that $|z_1|,\dots,|z_\ell|\ge\delta_1$. Then
$$
|g(z)|\le \left(\prod_{j=1}^n \max\Big\{\frac{\delta_1}{|\zeta_j|}, \frac{|\zeta_j|}{\delta_1}\Big\}\right)^{\epsilon}.
$$
All in all, we have $\|g\|_{D}\to1$ when $\epsilon\to0$.

In case, $\zeta_1\cdot\ldots\zeta_n=0$, consider a projection $\pi:\CC^n\to\CC^{n-1}$. Take $\widetilde D=\pi(D)$.

For the construction of a peak function we need the following simple, however, useful result.

\begin{theorem}
Let $\Omega\subset\CC^n$ be a domain and let $\zeta\in\partial\Omega$, $\eta\in\Omega$, $r\in[0,1)$ be fixed. Assume that
for any $\epsilon>0$ there exists $f_\epsilon\in\OO(\Omega)\cap C(\Omega\cup\{\zeta\})$ such that
\begin{enumerate}
\item $|f_\epsilon|<1+\epsilon$ on $\Omega$;
\item $f_\epsilon(\zeta)=1$;
\item $|f_\epsilon(\eta)|<r$.
\end{enumerate}
Then there exists $F\in\OO(\Omega)\cap C(\Omega\cup\{\zeta\})$ such that
\begin{enumerate}
\item $|F|<1$ on $\Omega$;
\item $F(\zeta)=1$.
\end{enumerate}
\end{theorem}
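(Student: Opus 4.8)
The plan is to move the problem to functions of positive real part by a Cayley transform, so that the forbidden ``peak value $1$'' (a boundary point of $\DD$, which a non\-constant holomorphic map cannot attain at an interior point) is replaced by the point $\infty$ at the boundary of the right half\-plane, and $\infty$ \emph{can} be produced as an infinite sum of finite values. First I would normalise: pick $\eps_k\downarrow 0$ and set $g_k=f_{\eps_k}/(1+\eps_k)$. Then $g_k\in\OO(\Omega,\DD)\cap C(\Omega\cup\{\zeta\})$ really maps into $\DD$, with $g_k(\zeta)=1/(1+\eps_k)$ and $|g_k(\eta)|<r$. Applying $w_k=\frac{1+g_k}{1-g_k}$, each $w_k$ is holomorphic with $\re w_k>0$ on $\Omega$, continuous up to $\zeta$, and $w_k(\zeta)=\frac{2+\eps_k}{\eps_k}\to+\infty$, while $w_k(\eta)$ lies in the fixed compact set $\{\frac{1+w}{1-w}:|w|\le r\}\subset\{\re>0\}$, so $|w_k(\eta)|\le\frac{1+r}{1-r}$.

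Next I would build $W=\sum_k c_k w_k$ with weights $c_k>0$ chosen so that $\sum_k c_k w_k(\zeta)=+\infty$ while $\sum_k c_k<\infty$; this is possible precisely because $w_k(\zeta)\to\infty$ (e.g.\ $c_k=1/(k\,w_k(\zeta))$). The decisive point is that the partial sums $\re W_N=\sum_{k\le N}c_k\re w_k$ form an increasing sequence of positive harmonic functions on the connected domain $\Omega$ which is bounded at the single point $\eta$, by $\frac{1+r}{1-r}\sum_k c_k<\infty$. By Harnack's principle the sequence then converges locally uniformly on $\Omega$ to a finite positive harmonic function; using the uniform bound on $|w_k(\eta)|$ to control the harmonic conjugates, $W_N$ converges locally uniformly to a holomorphic $W$ with $\re W>0$ on $\Omega$.

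Finally I would set $F=\frac{W-1}{W+1}$. Since $\re W>0$ we have $|W-1|<|W+1|$, i.e.\ $|F|<1$ on $\Omega$. For the behaviour at $\zeta$: for each fixed $N$, as $z\to\zeta$ one has $\re W(z)\ge\re W_N(z)\to\sum_{k\le N}c_k w_k(\zeta)$ (the $w_k(\zeta)$ being positive reals), and letting $N\to\infty$ forces $\re W(z)\to+\infty$; hence $F(z)=\frac{W-1}{W+1}\to 1$, so $F$ extends continuously to $\Omega\cup\{\zeta\}$ with $F(\zeta)=1$. This is the desired peak function, and the hypothesis point $\eta$ also guarantees $F\not\equiv 1$ since $W(\eta)$ is finite.

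The one delicate requirement—and the heart of the matter—is the simultaneous demand that $W$ be finite on all of $\Omega$ yet blow up at $\zeta$. This is exactly what the assumption at $\eta$ buys: the bound $|g_k(\eta)|<r<1$ pins $w_k(\eta)$ inside a fixed compact subset of the right half\-plane, which makes $\sum_k c_k w_k$ converge at $\eta$ and, through Harnack's inequality, throughout $\Omega$; without such a uniform control point a sum of positive\-real\-part functions could diverge identically. The only genuinely technical step is passing from the harmonic convergence of $\re W_N$ to the holomorphic convergence of $W_N$, i.e.\ controlling the imaginary parts, which I expect to handle by a Borel--Carath\'eodory type estimate anchored at $\eta$.
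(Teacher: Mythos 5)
Your proof is correct and is essentially the paper's argument: pass to the right half-plane via $w\mapsto\frac{1+w}{1-w}$, form a weighted sum of positive-real-part functions that converges on $\Omega$ (controlled at $\eta$, hence everywhere by Harnack/Borel--Carath\'eodory) but whose real part blows up at $\zeta$, and map back by $W\mapsto\frac{W-1}{W+1}$. The only cosmetic difference is that the paper first normalizes to $f_\epsilon(\eta)=0$ by a M\"obius transformation and uses fixed weights $2^{-k}$, whereas you keep general $r$ and choose the weights $c_k$ adaptively; both variants work.
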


It is easy to check
\begin{lemma} Let $\epsilon>0$. Then
$$
\Re\left(\frac{1+z}{1-z}\right)>\frac{1}{\epsilon}\Longleftrightarrow |z-\frac{1}{1+\epsilon}|<\frac{\epsilon}{1+\epsilon}.
$$
\end{lemma}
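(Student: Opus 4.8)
The plan is to reduce the equivalence to elementary algebra, since both sides simply describe explicit regions in the $z$-plane. First I would compute the real part of the Cayley-type transform by multiplying numerator and denominator by $\overline{1-z}$. Writing $z=x+iy$, the numerator $(1+z)\overline{(1-z)}=1+(z-\bar z)-|z|^2$ has real part $1-|z|^2$, while the denominator becomes $|1-z|^2$, so that
$$
\Re\left(\frac{1+z}{1-z}\right)=\frac{1-|z|^2}{|1-z|^2}.
$$
This reformulation is the crux; everything else follows by manipulating this single identity. Note that the expression only makes sense for $z\neq1$, which I would keep in mind throughout.

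Next I would rewrite the left-hand inequality as $\frac{1-|z|^2}{|1-z|^2}>\frac1\epsilon$ and, using that $\epsilon>0$ and $|1-z|^2>0$ for $z\neq1$, clear denominators without changing the direction of the inequality, obtaining $\epsilon(1-|z|^2)>|1-z|^2$. Expanding $|z|^2=x^2+y^2$ and $|1-z|^2=1-2x+x^2+y^2$ and collecting terms yields a quadratic inequality in $x,y$ with positive leading coefficient $1+\epsilon$. Dividing by $1+\epsilon$ and completing the square in $x$ should produce exactly
$$
\left(x-\frac{1}{1+\epsilon}\right)^2+y^2<\left(\frac{\epsilon}{1+\epsilon}\right)^2,
$$
which is the right-hand condition $\bigl|z-\tfrac1{1+\epsilon}\bigr|<\tfrac{\epsilon}{1+\epsilon}$. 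The one arithmetic point worth verifying is that the constant on the right simplifies, via $\frac{1}{(1+\epsilon)^2}-\frac{1-\epsilon}{1+\epsilon}=\frac{1-(1-\epsilon^2)}{(1+\epsilon)^2}=\frac{\epsilon^2}{(1+\epsilon)^2}$, to the claimed squared radius.

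Since every step is reversible — the conjugate multiplication is an identity, and clearing the positive denominator and completing the square are equivalences — the chain establishes the ``$\Longleftrightarrow$'' in both directions at once. I do not anticipate a genuine obstacle here: the only things to be careful about are the sign of $|1-z|^2$ (handled by excluding $z=1$, which in any case lies on the boundary of the target disc and so is correctly excluded on both sides) and the elementary simplification of the radius. In particular no deep input is required; this is precisely the ``easy to check'' computation promised in the statement.
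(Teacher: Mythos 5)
Your computation is correct and is exactly the routine verification the paper omits (it states the lemma with only ``it is easy to check''): the identity $\Re\bigl(\frac{1+z}{1-z}\bigr)=\frac{1-|z|^2}{|1-z|^2}$, clearing the positive denominator, and completing the square do give the disc $\bigl|z-\frac{1}{1+\epsilon}\bigr|<\frac{\epsilon}{1+\epsilon}$, with the radius simplification $\frac{1}{(1+\epsilon)^2}-\frac{1-\epsilon}{1+\epsilon}=\frac{\epsilon^2}{(1+\epsilon)^2}$ checking out. Your remark that $z=1$ lies on the boundary circle, hence is excluded on both sides, correctly disposes of the only degenerate point.
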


\begin{proof}[Proof of Theorem]
Without loss of generality we may assume that $r=0$. Indeed, take a sequence
$$
g_\epsilon(\lambda)=a(1+\epsilon)\cdot\frac{f_\epsilon(\lambda)-f_\epsilon(\eta)}
{(1+\epsilon)^2-\overline{f_\epsilon(\eta)}f_\epsilon(\lambda)},
$$
where
$$
a=\frac{1}{1+\epsilon}\cdot\frac{(1+\epsilon)^2-\overline{f_\epsilon(\eta)}}{1-f_\epsilon(\eta)}.
$$
Note that $g_\epsilon(\eta)=0$, $g_\epsilon(\zeta)=1$, and
$$
|g_\epsilon(\lambda)|\le |a|=
\left|\frac{1}{1+\epsilon}\cdot\frac{(1+\epsilon)^2-f_\epsilon(\eta)}{1-f_\epsilon(\eta)}\right|\le
\frac{1}{1+\epsilon}\left(1+\frac{2\epsilon+\epsilon^2}{1-r}\right)\to1\quad\text{when }\epsilon\to0.
$$

So, we assume that $r=0$. Put $\epsilon_k=\frac{1}{4^k}$. Then there exists
$f_k=f_{\epsilon_k}\in\OO(\Omega)\cap C(\Omega\cup\{\zeta\})$ such that
\begin{enumerate}
\item $|f_k|<1+\epsilon_k$ on $\Omega$;
\item $f_k(\zeta)=1$;
\item $f_k(\eta)=0$.
\end{enumerate}
Put $U_k=\{z\in\Omega\cup\{\zeta\}: |f_k(z)-1|<\epsilon_k\}$ and
$$
h(z)=\sum_{k=1}^\infty \frac{1}{2^k}\cdot \frac{1+\tilde f_k(z)}{1-\tilde f_k(z)},
$$
where $\tilde f_k=\frac{f_k}{1+\epsilon_k}$. Note that
$\Re h(z)\ge \frac{1}{2^k}\cdot \Re\left(\frac{1+\tilde f_k(z)}{1-\tilde f_k(z)}\right)\ge 2^k$ for $z\in U_k$.
Put $F=\frac{h-1}{h+1}$. Then $|F-1|\le\frac{2}{|h+1|}\le\frac{2}{|\Re h+1|}\le \frac{2}{2^k+1}$ on $U_k$.
\end{proof}

Using Sibony's ideas we show that there exists a domain $D\subset\CC^2$ and a boundary point such that a weak peak
function exists, however, peak function does not exist.

\begin{example} Fix an irrational number $\alpha>0$.
Let $D\subset\CC^2$ be a domain and let $(z_0,w_0)\in\partial D$. Assume that there exists
a neighborhood $U\subset\CC^2$ of $(z_0,w_0)$ such that
$$
D\cap U=\{(z,w)\in\CC^2: |z|\cdot |w|^\alpha<1\}\cap U.
$$
We want to show that there does not exist a holomorphic function $f\in\OO(D)\cap C(D\cup U)$ such that $|f|<1$ on $D$
and $f(z_0,w_0)=1$.

Indeed, assume that such a function exists. There exists a neighborhood $V\subset\CC$ of the origin such that
$(z_0e^{-\alpha\lambda},w_0e^{\lambda})\in U$ whenever $\lambda\in V$. For sufficiently big $n\in\NN$ consider functions
$$
\psi_n(\lambda)= f(z_0e^{-\alpha\lambda},(1-\frac1n)w_0e^{\lambda}).
$$
Note that $\psi_n:V\to D\cap U$ is a holomorphic mapping. Hence, there exists a subsequence $\{n_k\}$ such that
$\psi_{n_k}$ tends locally uniformly on $V$ to a holomorphic mapping $\psi:V\to\CC^2$. It is easy to see (use continuity of $f$) that
$$
\psi(\lambda)=f(z_0e^{-\alpha\lambda},w_0e^{\lambda}).
$$
So, we get that $\psi:V\to \overline{D\cap U}$ is a holomorphic mapping such that $|\psi|\le1$ and $\psi(0)=1$.
Hence, $\psi\equiv1$. Since $\alpha$ is irrational, we get $\{(z_0e^{-\alpha\lambda},w_0e^{\lambda}): \lambda\in V\}$
is dense in a neighborhood of $(z_0,w_0)$. From the continuity of $f$ we get that $f=1$ on a relatively open subset $\partial D$ containing
$(z_0,w_0)$. Then a function $f(z_0,\lambda w_0)=1$ on the open subset of the unit circle containing $1$. Hence, $f(z_0,\lambda w_0)=1$ everywhere.
A contradiction.
\end{example}

\bibliographystyle{amsplain}

\begin{thebibliography}{10}

\bibitem{BP} P.~Berti, P.~Rigo, \textit{Integral representation of linear functionals on spaces of unbounded functions},
PAMS 128 (2000), 3251--3258.

\bibitem{Bis} E.~Bishop, \textit{A minimal boundary for function algebras}, Pacific J. Math. 9 (1959), 629--642.

\bibitem{E} Edwards D. A., \textit{Choquet boundary theory for
    certain spaces of lower semicontinuous functions, in
    Function Algebras,} Proc. Internat. Sympos. on Function
    Algebras, Chicago, (1966), 300–309.

\bibitem{Gam} T.W.~Gamelin, \textit{Uniform algebras}, Prentice-Hall, 1969.

\bibitem{GG} T.W.~Gamelin, J.~Garnett, \textit{Distinguished homomorphisms and fiber algebras}, Trans. Amer. Math. Soc (1970), 455--474.

\bibitem{GPP} N.~Gogus, T.~Perkins, E.~Poletsky, \textit{Non-compact versions of Edwards' Theorem}, Positivity 17 (2013), 459--473.

\bibitem{J-P} M.~Jarnicki \& P. Pflug, \textit{Invariant distances and metrics in complex analysis},
De Gruyter Expositions in Mathematics 9, 2nd ext. edition, 2013.

\bibitem{Hewitt} E.~Hewitt, \textit{Linear functionals on spaces of continuous functions}, Fund. Math. 37 (1950), 161--189.

\bibitem{KZ} \L.~Kosi\'nski, W.~Zwonek, \textit{Proper holomorphic mappings vs. peak points and Shilov boundary},
Ann. Polon. Math. 107 (2013), 97--108.

\bibitem{P} E. A. Poletsky, \textit {Plurisubharmonic functions
    as solutions of variational problems,} Proc. Sympos. Pure
    Math. {\bf 52} (1991), Part 1, 163--171.

\bibitem{W} F. Wikstr\"om, \textit{Jensen measures and boundary values of plurisubharmonic functions,}
    Ark. Mat. {\bf 39}(2001), 181--200.

\bibitem{Res} P.~Ressel, \textit{Integral representations on convex semigroups}, Math. Scand. 61 (1987), 93--111.

\bibitem{Rud} W.~Rudin, \textit{Real and Complex Analysis}, McGraw-Hill, 1987.

\bibitem{Sel} M.A. Selby, \textit{On completeness with respect to the Carath\'eodory metric}, Canad. Math.
Bull. 17 (1974), 261--263.

\bibitem{Sib} N. Sibony, \textit{Prolongement de fonctions holomorphes born\'ees et metrique de Carath\'eodory},
Invent. Math. 29 (1975), 205--230.

\bibitem{Sto} E.L.~Stout, \textit{The theory of uniform algebras}, Bogden \& Quigley, 1971.

\bibitem{Zwo1} W. Zwonek, \textit{On Caratheodory completeness of pseudoconvex Reinhardt domains}, PAMS 128 (2000), 857--864.

\end{thebibliography}

\end{document}